\newtheorem{theorem}{Theorem}[section]
\newtheorem{problem}[theorem]{Problem}
\newtheorem{corollary}[theorem]{Corollary}
\newtheorem{lemma}[theorem]{Lemma}
\newtheorem{proposition}[theorem]{Proposition}
\newtheorem{example}[theorem]{Example}
\theoremstyle{definition}
\theoremstyle{remark}
\theoremstyle{problem}
\numberwithin{equation}{section}
\newcommand{\cl}{\operatorname{cl}}
\newcommand{\intr}{\operatorname{int}}
\newcommand{\adh}{\operatorname{adh}}
\def\0{\varnothing}
\begin{document}
\title{When is the Isbell topology a group topology?}
\author{Szymon Dolecki}
\address{Mathematical Institute of Burgundy\\
Burgundy University, B.P. 47 870, 21078 Dijon, France}
\email{dolecki@u-bourgogne.fr}
\author{Fr\'{e}d\'{e}ric Mynard}
\address{Department of Mathematical Sciences, Georgia Southern University,
PB 8093, Statesboro GA 30460, U.S.A.}
\email{fmynard@georgiasouthern.edu}
\thanks{We are grateful to Professor Ahmed Bouziad (University of Rouen) for
comments that helped us to improve this paper.}
\subjclass[2000]{54C35, 54C40, 54H11}
\date{%
\today%
}
\maketitle

\begin{abstract}
Conditions on a topological space $X$ under which the space $C(X,\mathbb{R})$
of continuous real-valued maps with the Isbell topology $\kappa $ is a
topological group (topological vector space) are investigated. It is proved
that the addition is jointly continuous at the zero function in $C_{\kappa
}(X,\mathbb{R})$ if and only if $X$ is infraconsonant. This property is
(formally) weaker than consonance, which implies that the Isbell and the
compact-open topologies coincide. It is shown the translations are
continuous in $C_{\kappa }(X,\mathbb{R})$ if and only if the Isbell topology
coincides with the fine Isbell topology. It is proved that these topologies
coincide if $X$ is prime (that is, with at most one non-isolated point), but
do not even for some sums of two consonant prime spaces.
\end{abstract}

\section{Introduction}

In \cite{Isbell_a} and \cite{Isbell_b} Isbell introduced and studied a
topology on the space $C(X,Z)$ of continuous functions from a topological
space $X$ to a topological space $Z$, defined in terms of (what is now
called) \emph{compact families} of open subsets of $X$ and open subsets of $%
Z $. The \emph{Isbell topology} is finer than the \emph{compact-open}
topology and coarser than the \emph{natural topology} (that is, the
topological reflection of the \emph{natural convergence}, most often called 
\emph{continuous convergence}). Recently Jordan introduced in \cite%
{francis.coincide} several intermediate topologies, finer than the Isbell
and coarser than the natural topology, that turn out to be instrumental in
understanding function spaces. One of them is the so-called \emph{fine
Isbell topology}.

The Isbell topology and the natural topology coincide on $C(X,\$)$ (that can
be identified with the set of closed subsets of $X$) and on the homeomorphic
space $C(X,\$^{\ast })$ (of open subsets of $X$) where it is homeomorphic to
the \emph{Scott topology} \footnote{$\$:=\left\{ \varnothing ,\left\{
1\right\} ,\left\{ 0,1\right\} \right\} $ and $\$^{\ast }:=\left\{
\varnothing ,\left\{ 0\right\} ,\left\{ 0,1\right\} \right\} $ are two
homeomorphic representations of the\emph{\ Sierpi\'{n}ski topology} on $%
\left\{ 0,1\right\} .$}. The open sets for the Scott topology on $C(X,\$)$
are precisely the compact families of open subsets of $X$. A topological
space $X$ is called \emph{consonant} \cite{DGL.kur} if these topologies on $%
C(X,\$^{\ast })$ coincide with the compact-open topology.\footnote{%
In other words, if each compact family on $X$ is compactly generated.}

It is known that if $X$ is consonant, then the Isbell topology on $C(X,%
\mathbb{R})$ coincides with the compact-open topology. We prove that the
converse is true for completely regular spaces, partially answering \cite[%
Problem 62]{openproblems2}. Answering \cite[Problem 61]{openproblems2}
positively, we also show that the Isbell topology on $C(X,Z)$ is completely
regular whenever $Z$ is.

There are consonant examples (e.g., \cite[Example 5.12]{ELS}, \cite%
{GeorgiouIlliadis07}) of spaces, for which the Isbell topology is strictly
coarser than the natural topology, but to our knowledge there is so far no
characterization of $X$ for which the Isbell topology and the natural
topology coincide on $C(X,\mathbb{R})$.

The natural convergence is always a group convergence, in particular, it is
invariant under translations\footnote{%
Actually, the natural convergence is a convergence vector space.}, hence the
natural topology is also invariant under translations as the topological
reflection of the natural convergence (see \cite{DMtransfer}), but need not
be a group topology, e.g. \cite{jarchowbook}. In \cite{Isbellvector}, B.
Papadopoulos proposes a sufficient condition on a topological space $X$ for
the Isbell topological space $C_{\kappa }(X,\mathbb{R)}$ to be a vector
space topology. However, it seems that no example has been known so far of a
space $X$, for which $C_{\kappa }(X,\mathbb{R)}$ is \emph{not }a vector
space topology.

In this note, we investigate under what conditions the Isbell topology is a
group topology, equivalently a vector space topology, because we prove that
multiplication by scalars is jointly continuous for the Isbell topology.

In general, a topology on an abelian group is a group topology if and only
if the translations and the inversion are continuous, and if the group operation is
(jointly) continuous at the neutral group element. As the inversion is a homeomorphism
for the Isbell topology on $C(X,\mathbb{R})$, we are confronted with two
quests about the Isbell topology on $C(X,\mathbb{R})$:

\begin{enumerate}
\item invariance by translations, and

\item continuity of the addition at the zero function $\overline{0}$, that
is, the property 
\begin{equation}
\mathcal{N}_{\kappa }(\overline{0})+\mathcal{N}_{\kappa }(\overline{0})\geq 
\mathcal{N}_{\kappa }(\overline{0}).  \label{eq:isbellat0}
\end{equation}
\end{enumerate}

More specifically, we show that the space $C_{\kappa }(X,\mathbb{R})$ of
real-valued continuous functions on $X$ endowed with the Isbell topology is
invariant under translations if and only if\emph{\ }the Isbell and fine
Isbell topologies coincide. In \cite{francis.coincide}, Jordan provides an
example of a topological space $X$, for which the Isbell and fine Isbell
topologies on $C(X,\mathbb{R})$ do \emph{not }coincide. This shows that
there exists $X$ for which $C_{\kappa }(X,\mathbb{R})$ is not invariant by
translations.

We call a space \emph{infraconsonant }if every compact family $\mathcal{A}$
contains another compact family $\mathcal{B}$ such that every pairwise
intersection of elements of $\mathcal{B}$ belongs to $\mathcal{A}$, and we
show that (\ref{eq:isbellat0}) holds if and only if $X$ is infraconsonant.
Of course, every consonant space is infraconsonant. There are infraconsonant
and non consonant spaces, but we do not know yet of a completely regular one.

\begin{problem}
\label{prob:infranonC} Does there exist a completely regular infraconsonant
space that is not consonant?
\end{problem}

We call a topological space \emph{prime }if it has at most one non-isolated
point. We show that $C_{\kappa }(X,\mathbb{R})$ is invariant under
translations if $X$ is a prime space and that there are prime spaces that
are not infraconsonant. In other words, $C_{\kappa }(X,\mathbb{R})$ may be
translation-invariant without satisfying (\ref{eq:isbellat0}). We also show
that $C_{\kappa }(X,\mathbb{R})$ may fail to have either of these
properties. However, we do not know if it can satisfy (\ref{eq:isbellat0})
without being invariant under translations. In other words:

\begin{problem}
\label{prob:infra/translations} Does there exist a completely regular
infraconsonant space $X$ such that $C_{\kappa }(X,\mathbb{R})$ has
discontinuous translations?
\end{problem}

A positive solution to this problem would also provide a positive answer to
Problem \ref{prob:infranonC}, because $C_{\kappa }(X,\mathbb{R})$ is a
topological group if $X$ is consonant \footnote{%
Problem \ref{prob:infra/translations} has been recently solved in the
negative in \cite{groupisbell}.}. We do not know if the converse is true:

\begin{problem}
\label{prob:groupnoncons} Does there exist a non consonant completely
regular space $X$ such that $C_{\kappa }(X,\mathbb{R})$ is a topological
group?
\end{problem}

In view of our result, a prime positive solution to Problem \ref%
{prob:infranonC} would also provide a positive answer to Problem \ref%
{prob:groupnoncons}.

\section{Generalities}

If $\mathcal{A}$ is a family of subsets of a topological space $X$ then $%
\mathcal{O}_{X}(\mathcal{A})$ denotes the family of open subsets of $X$
containing an element of $\mathcal{A}$. In particular, if $A\subset X$ then $%
\mathcal{O}_{X}(A)$ denotes the family of open subsets of $X$ containing an
element of $A$. A family $\mathcal{A=O}_{X}(\mathcal{A)}$ is \emph{compact }%
if whenever $\mathcal{P}\subset \mathcal{O}_{X}$ and $\bigcup \mathcal{P}\in 
\mathcal{A}$ then there is a finite subfamily $\mathcal{P}_{0}$ of $\mathcal{%
P}$ such that $\bigcup \mathcal{P}_{0}\in \mathcal{A}$. Of course, for each
compact subset $K$ of $X,$ the family $\mathcal{O}_{X}(K)$ is compact. The
following proposition extends the fact that continuous functions are bounded
on compact sets.

\begin{proposition}
\label{prop:bounded} If $\mathcal{A}$ is a compact family on $X$ and $f\in
C(X,\mathbb{R})$, then there is $A\in \mathcal{A}$ such that $f(A)$ is
bounded.
\end{proposition}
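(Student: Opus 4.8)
The plan is to transcribe, almost verbatim, the classical argument that a continuous function is bounded on a compact set, but with the finite-subcover step replaced by the finite-union property that is built into the definition of a compact family. The outcome will actually be slightly stronger than boundedness on a compact set, since $A$ will be an \emph{open} member of $\mathcal{A}$ on which $f$ is bounded.

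The one preliminary observation I would isolate is that a nonempty compact family always contains $X$ itself. Indeed, pick any $A_{0}\in\mathcal{A}$; since $\mathcal{A}=\mathcal{O}_{X}(\mathcal{A})$ consists of open sets and is closed under passing to open supersets, and $X$ is an open set with $A_{0}\subseteq X$, we get $X\in\mathcal{O}_{X}(\mathcal{A})=\mathcal{A}$. (I take $\mathcal{A}\neq\varnothing$ here, as is implicit: the empty family is vacuously compact but makes the existence statement fail, so it must be excluded.)

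Next I would manufacture the right family of open sets to feed into the compactness condition. Set $U_{n}:=\{x\in X:|f(x)|<n\}=f^{-1}((-n,n))$ for $n\in\mathbb{N}$. Each $U_{n}$ is open because $f$ is continuous, the sequence $(U_{n})_{n}$ is increasing, and $\bigcup_{n}U_{n}=X$ because every value $f(x)$ is a real number of finite absolute value. Hence the family $\mathcal{P}:=\{U_{n}:n\in\mathbb{N}\}$ is a subfamily of $\mathcal{O}_{X}$ with $\bigcup\mathcal{P}=X\in\mathcal{A}$. The definition of a compact family now applies directly and yields a finite subfamily $\mathcal{P}_{0}\subseteq\mathcal{P}$ with $\bigcup\mathcal{P}_{0}\in\mathcal{A}$. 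Since the $U_{n}$ are nested, $\bigcup\mathcal{P}_{0}=U_{N}$ for $N$ the largest index occurring in $\mathcal{P}_{0}$, so $U_{N}\in\mathcal{A}$. Taking $A:=U_{N}$ then finishes the argument, as $f(A)\subseteq(-N,N)$ is bounded.

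I do not anticipate a real obstacle: the whole proof hinges on the single step $X\in\mathcal{A}$, which is what allows me to use a cover of all of $X$ (rather than of some proper subset) and thereby legitimately invoke the finite-union property. The remaining ingredients — openness of the sublevel sets $U_{n}$, their monotonicity, and the fact that a finite union of nested sets is the largest one — are entirely routine.
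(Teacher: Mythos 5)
Your proof is correct and follows essentially the same route as the paper's: cover $X=\bigcup_{n}f^{-1}((-n,n))$ by the open sublevel sets, use $X\in\mathcal{A}$ and the compactness of $\mathcal{A}$ to land some $f^{-1}((-N,N))$ in $\mathcal{A}$. You merely spell out the nestedness and the $X\in\mathcal{A}$ step that the paper leaves implicit.
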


\begin{proof}
As $\bigcup\nolimits_{n<\omega }f^{-}(\left\{ r:\left\vert r\right\vert
<n\right\} )=X\in \mathcal{A}$ and $f$ is continuous, there exists $n<\omega 
$ such that $f^{-}(\left\{ r:\left\vert r\right\vert <n\right\} )\in 
\mathcal{A}$ by the compactness of $\mathcal{A}$.
\end{proof}

We denote by $\kappa (X)$ the collection of compact families on $X$. Seen as
a family of subsets of $\mathcal{O}_{X}$ (the set of open subsets of $X$), $%
\kappa (X)$ is the family of open subsets of the \emph{Scott topology};
hence every union of compact families is compact, in particular $%
\bigcup_{K\in \mathcal{K}}\mathcal{O}_{X}(K)$ is compact if $\mathcal{K}$ is
a family of compact subsets of $X$. A topological space is called \emph{%
consonant }if every compact family $\mathcal{A}$ is \emph{compactly generated%
}, that is, there is a family $\mathcal{K}$ of compact sets such that $%
\mathcal{A}=\bigcup_{K\in \mathcal{K}}\mathcal{O}_{X}(K)$. Similarly, $%
k(X):=\{{\mathcal{O}}(K): K\subseteq X \text{ compact }\}$ is a basis for a
topology on ${\mathcal{O}}_X$, and a space $X$ is consonant if and only if
this topology coincides with the Scott topology. Bouziad calls \emph{weakly
consonant }\cite{bouziad.borel}\emph{\ }a space $X$ in which for every
compact family $\mathcal{A}$ there is a compact subset $K$ of $X$ such that $%
\mathcal{O}_{X}(K)\subseteq \mathcal{A}$.

\begin{lemma}
\label{lem:nowhereconsonant}\cite[Lemma 36]{francis.coincide} Consonance and
weak consonance are equivalent among regular topological spaces.
\end{lemma}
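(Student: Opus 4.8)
The plan is to prove the two implications separately, noting that only one of them actually uses regularity. The easy direction, consonance $\Rightarrow$ weak consonance, needs no separation hypothesis at all: if $\mathcal{A}$ is a nonempty compact family and $\mathcal{A}=\bigcup_{K\in\mathcal{K}}\mathcal{O}_{X}(K)$ for some family $\mathcal{K}$ of compact sets, then $\mathcal{K}\neq\varnothing$, and choosing any single $K\in\mathcal{K}$ gives $\mathcal{O}_{X}(K)\subseteq\mathcal{A}$.

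The content is the converse: a regular weakly consonant space is consonant. First I would reduce the goal to a memberwise statement, namely that a compact family $\mathcal{A}$ is compactly generated as soon as, for every $U\in\mathcal{A}$, there is a compact $K\subseteq U$ with $\mathcal{O}_{X}(K)\subseteq\mathcal{A}$; for then the union of all these $\mathcal{O}_{X}(K)$ is contained in $\mathcal{A}$ and contains every $U\in\mathcal{A}$ (each caught by its own $K$), hence equals $\mathcal{A}$. So I fix $U\in\mathcal{A}$. The preliminary step, and the only place regularity enters, is to shrink $U$ inside $\mathcal{A}$: by regularity $U$ is the union of the open sets $V$ with $\overline{V}\subseteq U$, and since $U\in\mathcal{A}$ and $\mathcal{A}$ is compact, finitely many of these have a union $U^{\ast}\in\mathcal{A}$ with $\overline{U^{\ast}}\subseteq U$ (a finite union of closures).

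Next I would transport $\mathcal{A}$ across $U^{\ast}$ via the auxiliary family
$$\widehat{\mathcal{A}}:=\{W\in\mathcal{O}_{X}:W\cap U^{\ast}\in\mathcal{A}\}.$$
A routine verification, using that $U^{\ast}$ is open and that $\mathcal{A}$ is compact and closed under open supersets (i.e.\ $\mathcal{A}=\mathcal{O}_{X}(\mathcal{A})$), shows that $\widehat{\mathcal{A}}$ is again a compact family, nonempty because $X\in\widehat{\mathcal{A}}$. Weak consonance then provides a compact $K$ with $\mathcal{O}_{X}(K)\subseteq\widehat{\mathcal{A}}$, that is, $W\cap U^{\ast}\in\mathcal{A}$ for every open $W\supseteq K$. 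I then set $K':=K\cap\overline{U^{\ast}}$, which is compact and, crucially, contained in $\overline{U^{\ast}}\subseteq U$. To check $\mathcal{O}_{X}(K')\subseteq\mathcal{A}$, I take any open $V\supseteq K'$ and put $W:=V\cup(X\setminus\overline{U^{\ast}})$; this $W$ is open and contains $K$, so $W\cap U^{\ast}\in\mathcal{A}$, while $W\cap U^{\ast}=V\cap U^{\ast}\subseteq V$, whence $V\in\mathcal{A}$ because $\mathcal{A}$ is closed under open supersets. Since $K'\subseteq U$, this yields $U\in\mathcal{O}_{X}(K')\subseteq\mathcal{A}$, which is exactly the memberwise claim.

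The main obstacle is precisely the trapping of the core inside the prescribed member: weak consonance only delivers a compact core somewhere in $\widehat{\mathcal{A}}$, with no a priori reason to sit inside $U$. Passing from $U$ to $U^{\ast}$ with $\overline{U^{\ast}}\subseteq U$ and then cutting the core down to $K\cap\overline{U^{\ast}}$ is what confines it to $U$, and this is the one manoeuvre that collapses without regularity. Everything else — that $\widehat{\mathcal{A}}$ is a compact family, and that $W=V\cup(X\setminus\overline{U^{\ast}})$ performs the bookkeeping $W\cap U^{\ast}=V\cap U^{\ast}$ — I expect to be mechanical.
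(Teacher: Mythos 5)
Your proof is correct. Note first that the paper does not actually prove this lemma --- it is imported verbatim as Lemma~36 of Jordan's \emph{Coincidence of function space topologies} --- so there is no internal argument to compare against; what follows is only a check of your reasoning. The reduction to the memberwise statement is right: $K_{U}\subseteq U$ gives $U\in\mathcal{O}_{X}(K_{U})$, and the union of the $\mathcal{O}_{X}(K_{U})$ stays inside $\mathcal{A}$, so equality follows. The shrinking step is a legitimate use of regularity plus compactness of $\mathcal{A}$: $U$ is the union of the open $V$ with $\overline{V}\subseteq U$, a finite subunion $U^{\ast}$ lies in $\mathcal{A}$, and $\overline{U^{\ast}}$ is the finite union of the corresponding closures, hence contained in $U$. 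The family $\widehat{\mathcal{A}}=\{W\in\mathcal{O}_{X}:W\cap U^{\ast}\in\mathcal{A}\}$ is openly isotone (since $\mathcal{A}$ is) and compact because $(\bigcup\mathcal{P})\cap U^{\ast}=\bigcup_{P\in\mathcal{P}}(P\cap U^{\ast})$, and it contains $X$; weak consonance then applies. Finally, for $V\supseteq K\cap\overline{U^{\ast}}$ the set $W=V\cup(X\setminus\overline{U^{\ast}})$ does contain all of $K$, and $W\cap U^{\ast}=V\cap U^{\ast}\subseteq V$ pushes $V$ into $\mathcal{A}$ by isotonicity. Two cosmetic points: the trivial direction should be stated for nonempty compact families (the empty family is vacuously compact but contains no $\mathcal{O}_{X}(K)$), which is the standing convention; and in the degenerate case $K\cap\overline{U^{\ast}}=\varnothing$ your argument yields $\varnothing\in\mathcal{A}$, hence $\mathcal{A}=\mathcal{O}_{X}=\mathcal{O}_{X}(\varnothing)$, which is still compactly generated, so nothing breaks. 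Your construction of $\widehat{\mathcal{A}}$ is in the same spirit as, but not identical to, the paper's restriction and trace operations $\mathcal{A}\downarrow A_{0}$ and $\mathcal{A}\vee C$; it is exactly the device needed to confine the compact core inside the prescribed member, which is indeed the only place regularity is used.
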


The \emph{Isbell topology }on $C(X,Z)$ can be defined by the following
subbase of open sets%
\begin{equation*}
\lbrack \mathcal{A},U]:=\{f\in C(X,Z):\exists A\in \mathcal{A},f(A)\subseteq
U\},
\end{equation*}%
where $\mathcal{A}$ ranges over $\kappa (X)$ and $U$ ranges over open
subsets of $Z$. We write $C_{\kappa }(X,Z)$ for the set $C(X,Z)$ endowed
with the Isbell topology, and $C_{k}(X,Z)$ if it is endowed with the
compact-open topology. A space $X$ is called $Z$-\emph{consonant} if $%
C_{\kappa }(X,Z)=C_{k}(X,Z)$. Note that if $\$$ denotes the Sierpi\'{n}ski
space, then $\$$-consonant means consonant. Moreover, the following is
immediate.

\begin{proposition}
\label{pro:consonant} $X$ is consonant if and only if it is $Z$-consonant
for every $Z$. In particular, if $X$ is consonant, then $C_{\kappa }(X,%
\mathbb{R})$ is a topological vector space.
\end{proposition}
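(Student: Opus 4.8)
The plan is to prove the non-trivial implication by comparing subbases, and to treat the converse and the ``in particular'' clause as quick consequences. The converse is immediate: if $X$ is $Z$-consonant for every $Z$, then it is in particular $\$$-consonant, which was noted above to mean exactly that $X$ is consonant. For the ``in particular'' clause, once we know that consonance yields $C_{\kappa}(X,\mathbb{R})=C_{k}(X,\mathbb{R})$, it remains only to recall that the compact-open topology on $C(X,\mathbb{R})$ is the topology of uniform convergence on compact subsets, hence is generated by the seminorms $p_{K}(f)=\sup_{x\in K}|f(x)|$ with $K$ compact, and is therefore a locally convex vector topology.

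The core is thus to show that if $X$ is consonant then $C_{\kappa}(X,Z)=C_{k}(X,Z)$ for every $Z$. Since the Isbell topology is always finer than the compact-open topology, it suffices to prove that every Isbell-subbasic set $[\mathcal{A},U]$ (with $\mathcal{A}\in\kappa(X)$ and $U$ open in $Z$) is open for the compact-open topology. Using consonance, I would write $\mathcal{A}=\bigcup_{K\in\mathcal{K}}\mathcal{O}_{X}(K)$ for a family $\mathcal{K}$ of compact sets, and first establish the set-theoretic identity
\[
[\mathcal{A},U]=\bigcup_{K\in\mathcal{K}}[\mathcal{O}_{X}(K),U].
\]
The inclusion $\supseteq$ is clear since $\mathcal{O}_{X}(K)\subseteq\mathcal{A}$ for each $K\in\mathcal{K}$; for $\subseteq$, if $f(A)\subseteq U$ for some $A\in\mathcal{A}$, then $A\in\mathcal{O}_{X}(K)$ for some $K\in\mathcal{K}$, so the very same witness $A$ shows $f\in[\mathcal{O}_{X}(K),U]$.

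The key step is then the identification of each piece as a compact-open subbasic set, namely
\[
[\mathcal{O}_{X}(K),U]=\{f\in C(X,Z):f(K)\subseteq U\}.
\]
Here the inclusion $\subseteq$ is trivial because $K$ is contained in every member of $\mathcal{O}_{X}(K)$, while the reverse inclusion is precisely the point where continuity enters: if $f(K)\subseteq U$ then $B:=f^{-}(U)$ is open, contains $K$, and satisfies $f(B)\subseteq U$, so $B$ witnesses $f\in[\mathcal{O}_{X}(K),U]$. Combining the two displays exhibits $[\mathcal{A},U]$ as a union of compact-open subbasic sets, which finishes the comparison and hence the proof.

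I do not anticipate a genuine obstacle: the argument is a bookkeeping of subbases. The only place demanding care is the second display, where one must not conflate the condition ``$f$ maps some open superset of $K$ into $U$'' with ``$f$ maps $K$ into $U$''; these agree exactly because $f$ is continuous and $U$ is open, so that $f^{-}(U)$ supplies the required open set. I would also record that the comparison is valid for an arbitrary target $Z$, so that no regularity or other hypothesis on $Z$ intervenes.
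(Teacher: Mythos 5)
Your proof is correct and is precisely the routine subbase comparison that the paper leaves implicit (the authors state the proposition as ``immediate'' and give no proof): the identity $[\mathcal{O}_{X}(K),U]=\{f:f(K)\subseteq U\}$ via $f^{-}(U)$, the decomposition of $[\mathcal{A},U]$ over a compact generating family, the observation that $\$$-consonance is consonance, and the standard fact that the compact-open topology on $C(X,\mathbb{R})$ is the seminorm topology of uniform convergence on compacta. Nothing is missing.
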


\cite[Problem 62]{openproblems2} asks for what spaces $Z$ (other than $\$$)
does $Z$-consonance imply consonance. We have the following partial answer,
which refines \cite[Theorem 4.4]{DGL.CRAS} which was announced without proof
and proved in \cite[Theorem 4.17]{mynard.uk} \footnote{%
Note that the notion of $\mathbb{R}$-consonance introduced in \cite%
{mynard.uk} (coincidence of the natural and compact-open topologies on $C(X,%
\mathbb{R})$) is stronger than our notion and should not be confused.}.

The \emph{grill of }a family $\mathcal{A}$ of subsets of $X$ is the family $%
\mathcal{A}^{\#}:=\{B\subseteq X:\forall A\in \mathcal{A}$, $A\cap B\neq
\varnothing \}$. Note that if $\mathcal{A}=\mathcal{O}(\mathcal{A)}$, then 
\begin{equation*}
A\in \mathcal{A\Longleftrightarrow }A^{c}\notin \mathcal{A}^{\#}\text{.}
\end{equation*}

\begin{proposition}
\label{pro:Rconsonant} If $X$ is completely regular and $\mathbb{R}$%
-consonant, then it is consonant.
\end{proposition}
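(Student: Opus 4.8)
The plan is to reduce consonance to weak consonance and then extract, from an arbitrary compact family, a single compact set generating it, using $\mathbb{R}$-consonance. Since $X$ is completely regular it is in particular regular, so by \lemref{lem:nowhereconsonant} it suffices to show that $X$ is weakly consonant: given a proper compact family $\mathcal{A}$ on $X$, I must produce a compact set $K\subseteq X$ with $\mathcal{O}_X(K)\subseteq\mathcal{A}$.

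First I would note that $[\mathcal{A},(-1,1)]$ is a subbasic Isbell-open neighborhood of the zero function $\overline{0}$, since $X\in\mathcal{A}$ and $\overline{0}(X)=\{0\}\subseteq(-1,1)$. Because $X$ is $\mathbb{R}$-consonant, the Isbell and compact-open topologies on $C(X,\mathbb{R})$ agree, so this set is also compact-open open and thus contains a basic compact-open neighborhood $\bigcap_{i=1}^{n}[K_i,(-\varepsilon_i,\varepsilon_i)]$ of $\overline{0}$. Putting $K:=\bigcup_{i=1}^{n}K_i$ and $\varepsilon:=\min_i\varepsilon_i$ then yields a compact set $K$ and $\varepsilon>0$ with
\[
[K,(-\varepsilon,\varepsilon)]=\{f\in C(X,\mathbb{R}):f(K)\subseteq(-\varepsilon,\varepsilon)\}\subseteq[\mathcal{A},(-1,1)].
\]

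Next I would verify that this $K$ works. Fix an open set $O\supseteq K$. This is where complete regularity is essential: since $K$ is compact and disjoint from the closed set $X\setminus O$, there is a continuous $f\colon X\to[0,1]$ with $f\equiv 0$ on $K$ and $f\equiv 1$ on $X\setminus O$ (take a finite minimum of functions separating points of $K$ from $X\setminus O$, then rescale by a map vanishing near $0$). Then $f(K)=\{0\}\subseteq(-\varepsilon,\varepsilon)$, so the displayed inclusion furnishes some $A\in\mathcal{A}$ with $f(A)\subseteq(-1,1)$. Since $f\equiv 1$ off $O$, every point of $A$ must lie in $O$, so $A\subseteq O$; because $\mathcal{A}=\mathcal{O}_X(\mathcal{A})$ is isotone, $O\in\mathcal{A}$. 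Hence $\mathcal{O}_X(K)\subseteq\mathcal{A}$, so $X$ is weakly consonant and therefore consonant by \lemref{lem:nowhereconsonant}.

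I expect the two delicate points to be the following. First, the passage from ``Isbell-open neighborhood of $\overline{0}$'' to an explicit compact-open basic neighborhood: the coincidence of the two topologies is exactly what allows me to collapse a whole compact family down to a single compact $K$. Second, the use of \emph{complete} regularity rather than mere regularity in the last paragraph---real-valued separating functions, not just open separations, are precisely what convert membership in $[K,(-\varepsilon,\varepsilon)]$ into the set containment $A\subseteq O$. Everything else is routine bookkeeping with the isotonicity of compact families.
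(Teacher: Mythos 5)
Your proof is correct and follows essentially the same route as the paper's: use $\mathbb{R}$-consonance at the zero function to trap a compact set $K$ with $[K,(-\varepsilon,\varepsilon)]\subseteq[\mathcal{A},(-1,1)]$, then apply a Urysohn-type function separating $K$ from the complement of an open $O\supseteq K$ to conclude $\mathcal{O}(K)\subseteq\mathcal{A}$, finishing via Lemma~\ref{lem:nowhereconsonant}. The only cosmetic difference is that you argue directly that each $O\supseteq K$ lies in $\mathcal{A}$, whereas the paper phrases the same step as a proof by contradiction.
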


\begin{proof}
If $X$ is $\mathbb{R}$-consonant then in particular $\mathcal{N}_{k}(%
\overline{0})\geq \mathcal{N}_{\kappa }(\overline{0})$ where $\overline{0}$
denotes the zero function. Hence for every $\mathcal{A}\in \kappa (X)$ there
exist a compact subset $K$ of $X$ and $r>0$ such that $[K,B_{r}]\subseteq %
\left[ \mathcal{A},B_{\frac{1}{2}}\right] $ where $B_{r}:=\left( -r,r\right) 
$. In view of Lemma \ref{lem:nowhereconsonant}, it is sufficient to show
that $\mathcal{O}(K)\subseteq \mathcal{A}$. Assume on the contrary that
there is an open set $U$ such that $K\subseteq U$ and $U\notin \mathcal{A}$.
Then the closed set $F:=X\setminus U$ is disjoint from $K$ and $F\in 
\mathcal{A}^{\#}$. As $X$ is completely regular, there is $h\in C(X,\mathbb{R%
})$ such that $h(K)=\{0\}$ and $h(F)=\{1\}$. Then $h\in \lbrack K,B_{r}]$
but $h\notin \left[ \mathcal{A},B_{\frac{1}{2}}\right] $ because $1\in h(A)$
for every $A\in \mathcal{A}$; a contradiction.
\end{proof}

In the proof above, we used the well-known fact that \emph{if }$A$\emph{\ is
a compact subset of a completely regular space }$X$\emph{\ and }$F$\emph{\
is a closed subset of }$X$\emph{\ such that }$A\cap F=\varnothing $\emph{,
then there exists }$h\in C(X,[0,1])$\emph{\ such that }$h(A)=\{0\}$\emph{\
and }$h(F)=\{1\}$. We extend this fact to a closed set and a compact family.

\begin{lemma}
\label{funct:comp_sep} If $\mathcal{A}=\mathcal{O}(\mathcal{A})$ is a
compact family of subsets of a completely regular topological space $X$, and 
$F$ is a closed subset of $X$ with $F^{c}\in \mathcal{A}$, then there is $%
A\in \mathcal{A}$ and $h\in C(X,[0,1])$ such that $h(A)=\{0\}$ and $%
h(F)=\{1\}$.
\end{lemma}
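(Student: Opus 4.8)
The plan is to mimic the classical separation argument for a compact set and a disjoint closed set, but to let the compactness of the family $\mathcal{A}$ play the role formerly played by compactness of a set. Since $F^{c}\in\mathcal{A}$ is open and $F$ is closed, every point $x\in F^{c}$ lies off $F$, so complete regularity furnishes a continuous $h_{x}\in C(X,[0,1])$ with $h_{x}(x)=0$ and $h_{x}(F)=\{1\}$.

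First I would form, for each such $x$, the open sublevel set $V_{x}:=\{y\in X:h_{x}(y)<\tfrac{1}{2}\}$. Then $x\in V_{x}$ and $V_{x}\cap F=\varnothing$ (because $h_{x}\equiv 1$ on $F$), so $\{V_{x}:x\in F^{c}\}$ is a family of open subsets of $X$ whose union is exactly $F^{c}$. Now comes the one genuinely non-routine step: since $F^{c}\in\mathcal{A}$ and $\mathcal{A}=\mathcal{O}_{X}(\mathcal{A})$ is compact, applying the definition of a compact family to $\mathcal{P}=\{V_{x}:x\in F^{c}\}$ yields finitely many points $x_{1},\dots,x_{n}$ with $A:=\bigcup_{i=1}^{n}V_{x_{i}}\in\mathcal{A}$. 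This $A$ is the member of $\mathcal{A}$ we are after, and $A\subseteq F^{c}$, so $A\cap F=\varnothing$.

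It remains to manufacture the function. The naive choice $\min_{i}h_{x_{i}}$ fails, because $h_{x_{i}}$ is only guaranteed to be $<\tfrac{1}{2}$, not $0$, on $V_{x_{i}}$; so I would first truncate, setting $g_{i}:=\max(0,\,2h_{x_{i}}-1)\in C(X,[0,1])$, which vanishes on $\{h_{x_{i}}\le\tfrac{1}{2}\}\supseteq V_{x_{i}}$ and equals $1$ on $F$. Finally put $h:=\min(g_{1},\dots,g_{n})$. Each point of $A$ lies in some $V_{x_{i}}$, forcing $g_{i}=0$ there and hence $h=0$ on $A$; meanwhile every $g_{i}=1$ on $F$, so $h=1$ on $F$. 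As $h$ is a minimum of finitely many continuous functions it is continuous, which completes the argument.

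I would flag the extraction of the finite subfamily as the conceptual heart of the proof: it is precisely there that ``compact family'' does the work that ``compact set'' does in the classical statement. By contrast, the truncation-and-minimum construction is a routine patch, included only to upgrade the pointwise vanishing $h_{x}(x)=0$ to vanishing on all of $A$.
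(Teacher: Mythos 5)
Your proof is correct and follows essentially the same route as the paper's: cover $F^{c}$ by open sets on which suitable separating functions vanish, use compactness of $\mathcal{A}$ (applied to this cover of $F^{c}\in\mathcal{A}$) to extract a finite subfamily whose union $A$ lies in $\mathcal{A}$, and take a minimum of finitely many continuous functions. The only difference is cosmetic: the paper asserts directly that complete regularity yields $h_{x}$ vanishing on a whole neighborhood $O_{x}$ of $x$, whereas you derive this via the sublevel set $V_{x}$ and the truncation $\max(0,2h_{x}-1)$ — which is exactly the standard justification the paper leaves implicit.
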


\begin{proof}
By complete regularity, for every $x\notin F$, there is an open neighborhood 
$O_{x}$ of $x$ and $h_{x}\in C(X,[0,1])$ such that $h_{x}(O_{x})=\{0\}$ and $%
h_{x}(F)=\{1\}$. Therefore $F^{c}=\bigcup_{x\notin F}O_{x}\in \mathcal{A}$,
so that by the compactness of $\mathcal{A}$ there is $n<\omega $ and $%
x_{1},\ldots ,x_{n}\notin F$ such that $A=\bigcup_{1\leq i\leq
n}O_{x_{i}}\in \mathcal{A}$. The continuous function $\min_{1\leq i\leq
n}h_{x_{i}}$ is $0$ on $A$ and $1$ on $F$.
\end{proof}

Consequently, for each $O\in \mathcal{A}$ there exists a continuous function 
$h$ valued in $\left[ 0,1\right] $ such that $O\supset \left\{ x:h(x)<\frac{1%
}{2}\right\} \supset \left\{ x:h(x)=0\right\} \supset \intr\left\{
x:h(x)=0\right\} \in \mathcal{A}$, that is,

\begin{corollary}
An (openly isotone) compact family of subsets of a completely regular
topological space has a base of co-zero sets and a base of the interiors of
zero sets.
\end{corollary}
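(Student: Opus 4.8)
The plan is to derive the corollary directly from Lemma~\ref{funct:comp_sep} by a routine chain of inclusions. Fix an openly isotone compact family $\mathcal{A}=\mathcal{O}(\mathcal{A})$ on a completely regular space $X$, and let $O\in\mathcal{A}$ be arbitrary. The goal is to exhibit, inside $O$, a member of $\mathcal{A}$ that is (resp. is the interior of) a zero set, and dually to realize $O$ itself as a superset of a co-zero element of $\mathcal{A}$. First I would observe that $F:=X\setminus O$ is closed and $F^{c}=O\in\mathcal{A}$, so the hypotheses of Lemma~\ref{funct:comp_sep} are met: there exist $A\in\mathcal{A}$ and $h\in C(X,[0,1])$ with $h(A)=\{0\}$ and $h(F)=\{1\}$.

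Next I would unwind what the separating function $h$ gives us. Since $h(A)=\{0\}$, we have $A\subseteq\{x:h(x)=0\}$, and since $h(F)=\{1\}$ the set $\{x:h(x)<\tfrac12\}$ misses $F$, hence $\{x:h(x)<\tfrac12\}\subseteq O$. This produces the displayed chain
\begin{equation*}
O\supseteq\Big\{x:h(x)<\tfrac12\Big\}\supseteq\{x:h(x)=0\}\supseteq\intr\{x:h(x)=0\}\supseteq A,
\end{equation*}
where the last inclusion holds because $A\subseteq\{x:h(x)=0\}$ and $A$ is open (being a finite union of the open sets $O_{x_i}$ from the proof of Lemma~\ref{funct:comp_sep}), so $A\subseteq\intr\{x:h(x)=0\}$. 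Because $\mathcal{A}$ is openly isotone and $A\in\mathcal{A}$, every open superset of $A$ lies in $\mathcal{A}$; in particular both the co-zero set $\{x:h(x)<\tfrac12\}$ and the open set $\intr\{x:h(x)=0\}$ belong to $\mathcal{A}$.

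Finally I would read off the two base statements. The sets of the form $\{x:h(x)<\tfrac12\}$ are co-zero (they are $g^{-}(\mathbb{R}\setminus\{0\})$ for $g:=(\tfrac12-h)\vee 0$, or more simply $\{h<\tfrac12\}$ is itself co-zero as a sublevel set of a continuous function), and each is a member of $\mathcal{A}$ contained in the arbitrary $O\in\mathcal{A}$; this shows $\mathcal{A}$ has a base of co-zero sets. Likewise the sets $\intr\{x:h(x)=0\}$ are interiors of zero sets, lie in $\mathcal{A}$, and sit inside $O$, giving a base of interiors of zero sets. The only point requiring care — and the place I would be most careful — is verifying that $\{x:h(x)<\tfrac12\}$ is genuinely a co-zero set and that $A$ being open legitimizes the step $A\subseteq\intr\{x:h(x)=0\}$; both are immediate once one recalls that $A$ is constructed as a finite union of the open neighborhoods $O_{x_i}$ in the lemma's proof, so no genuine obstacle arises and the corollary follows.
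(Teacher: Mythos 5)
Your proof is correct and takes essentially the same route as the paper, which establishes the corollary in the sentence immediately preceding it by applying Lemma~\ref{funct:comp_sep} to $F=X\setminus O$ and reading off the chain $O\supseteq\{x:h(x)<\tfrac12\}\supseteq\{x:h(x)=0\}\supseteq\intr\{x:h(x)=0\}\supseteq A$, exactly as you do. Your only addition is the explicit check that $A$ is open so that open isotony puts $\{x:h(x)<\tfrac12\}$ and $\intr\{x:h(x)=0\}$ into $\mathcal{A}$; note this follows already from $\mathcal{A}=\mathcal{O}(\mathcal{A})$ (all members of $\mathcal{A}$ are open), without needing to inspect the lemma's proof.
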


Papadopoulos says that a space $X$ has \emph{property }$(A^{\ast })$ if
whenever $\mathcal{A\in \kappa (}X)$ and $A_{1}$ and $A_{2}$ are open
subsets of $X$ such that $A_{1}\cup A_{2}\in \mathcal{A}$, there exists
filters $\mathcal{F}_{i}$ such that $A_{i}\in \mathcal{F}_{i}$, $i=1,2$ such
that $\mathcal{O}_{X}(\mathcal{F}_{i})\in \kappa (X)$ and $\mathcal{O}(%
\mathcal{F}_{1})\cap \mathcal{O}(\mathcal{F}_{2})\subseteq \mathcal{A}$. The
main result of \cite{Isbellvector} is that property $(A^{\ast })$ is
sufficient for the Isbell topology on $C(X,\mathbb{R})$ to be a vector space
topology. If $X$ is regular, this result follows immediately from
Proposition \ref{pro:consonant} because of the following:

\begin{proposition}
Let $X$ be a regular topological space. Then $X$ is consonant if and only if 
$X$ has property $(A^{\ast }).$
\end{proposition}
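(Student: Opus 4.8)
The plan is to prove the equivalence by showing each direction separately. The statement asserts that for a regular space $X$, consonance is equivalent to Papadopoulos's property $(A^{\ast})$. Since consonance is already known (Proposition \ref{pro:consonant}) to make $C_{\kappa}(X,\mathbb{R})$ a topological vector space, and property $(A^{\ast})$ is Papadopoulos's sufficient condition for the same conclusion, I expect property $(A^{\ast})$ to be a kind of "factorization through compactly generated pieces" condition that, under regularity, forces compact generation of compact families.

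Let me sketch the proof.

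\begin{proof}
Suppose first that $X$ is consonant. Let $\mathcal{A}\in\kappa(X)$ and let $A_{1},A_{2}$ be open sets with $A_{1}\cup A_{2}\in\mathcal{A}$. Since $X$ is consonant, $\mathcal{A}=\bigcup_{K\in\mathcal{K}}\mathcal{O}_{X}(K)$ for some family $\mathcal{K}$ of compact sets, so there is a single compact set $K\subseteq A_{1}\cup A_{2}$ with $\mathcal{O}_{X}(K)\subseteq\mathcal{A}$ and $K\in\mathcal{O}_{X}(K)$. Using regularity (indeed, compact sets in a regular space are split along an open cover $\{A_{1},A_{2}\}$ by closed sets $K=K_{1}\cup K_{2}$ with $K_{i}\subseteq A_{i}$, after shrinking), I would write $K$ as a union of two compact sets $K_{1}\subseteq A_{1}$ and $K_{2}\subseteq A_{2}$, and set $\mathcal{F}_{i}:=\mathcal{O}_{X}(K_{i})$. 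Then each $\mathcal{F}_{i}$ is a compact (hence $\kappa$) family with $A_{i}\in\mathcal{F}_{i}$, and $\mathcal{O}(\mathcal{F}_{1})\cap\mathcal{O}(\mathcal{F}_{2})=\mathcal{O}_{X}(K_{1})\cap\mathcal{O}_{X}(K_{2})=\mathcal{O}_{X}(K_{1}\cup K_{2})=\mathcal{O}_{X}(K)\subseteq\mathcal{A}$. This gives $(A^{\ast})$.

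Conversely, suppose $X$ has property $(A^{\ast})$. In view of Lemma \ref{lem:nowhereconsonant}, it suffices to establish weak consonance: given $\mathcal{A}\in\kappa(X)$ I must produce a single compact $K$ with $\mathcal{O}_{X}(K)\subseteq\mathcal{A}$. The idea is to iterate $(A^{\ast})$ finitely over open covers. Given $X=\bigcup_{i}A_{i}\in\mathcal{A}$ a finite open cover witnessing compactness, repeatedly applying $(A^{\ast})$ produces compact families $\mathcal{F}_{i}$ with $A_{i}\in\mathcal{F}_{i}$ whose finite meet lies in $\mathcal{A}$; each $\mathcal{F}_{i}$ being itself compact and openly isotone with a distinguished open member should, under regularity, be shown to contain the neighborhood filter of a compact set (this is where the hypothesis is doing real work). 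I would then assemble these compact pieces into a single compact $K$ with $\mathcal{O}_{X}(K)\subseteq\mathcal{A}$.
\end{proof}

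The hard part will be the converse direction, specifically extracting a genuine compact set from the abstract filters $\mathcal{F}_{i}$ supplied by property $(A^{\ast})$: the property only asserts existence of $\kappa$-families $\mathcal{O}_{X}(\mathcal{F}_{i})$, not of compact sets, so I expect to need regularity together with a minimality or iteration argument (possibly a Zorn's-lemma reduction to an ultrafilter-like minimal compact family, which in a regular space must be the neighborhood filter of a compact set) to convert the filter-theoretic conclusion of $(A^{\ast})$ into the set-theoretic conclusion of weak consonance. The forward direction is comparatively routine once the splitting of a compact set across a two-element open cover is invoked.
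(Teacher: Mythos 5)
Your forward direction is essentially the paper's argument: take a compact $K\subseteq A_{1}\cup A_{2}$ generating inside $\mathcal{A}$, split it by regularity into compact pieces $K_{i}\subseteq A_{i}$, and use $\mathcal{O}_{X}(K_{1})\cap \mathcal{O}_{X}(K_{2})=\mathcal{O}_{X}(K_{1}\cup K_{2})=\mathcal{O}_{X}(K)\subseteq \mathcal{A}$. That half is fine.

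The converse, however, has a genuine gap. Everything hinges on converting the abstract compact filters $\mathcal{F}_{i}$ delivered by $(A^{\ast})$ into actual compact sets, and you explicitly leave that step as something that ``should, under regularity, be shown'' via an unspecified Zorn-type minimality argument. The fact you need is precise and nontrivial: \emph{if $\mathcal{F}$ is a compact filter on a regular space, then $\adh\mathcal{F}$ is compact and $\mathcal{O}(\mathcal{F})=\mathcal{O}(\adh\mathcal{F})$} (this is \cite[Proposition 2.2]{DGL.kur}, which the paper cites at exactly this point). Without proving or invoking that lemma, your sketch does not close; the ultrafilter/minimal-family speculation is not a substitute, and it is the adherence of the filter, not a minimal subfamily, that does the job. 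Once you have that lemma, your argument also becomes much simpler than you anticipate: there is no need to iterate $(A^{\ast})$ over finite covers or to assemble pieces. For each $A\in\mathcal{A}$ apply $(A^{\ast})$ to the trivial decomposition $A=A\cup A$; the resulting filters combine (via $\mathcal{F}_{1}\wedge\mathcal{F}_{2}$, whose open sets are $\mathcal{O}(\mathcal{F}_{1})\cap\mathcal{O}(\mathcal{F}_{2})$) into a single compact filter $\mathcal{F}$ with $A\in\mathcal{F}$ and $\mathcal{O}(\mathcal{F})\subseteq\mathcal{A}$, and then $K_{A}:=\adh\mathcal{F}$ is a compact subset of $A$ with $\mathcal{O}(K_{A})\subseteq\mathcal{A}$, so $\mathcal{A}=\bigcup_{A\in\mathcal{A}}\mathcal{O}(K_{A})$ is compactly generated directly --- no detour through weak consonance and Lemma \ref{lem:nowhereconsonant} is required (though that detour would also work once a single such $K$ is produced).
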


\begin{proof}
Assume that $X$ is consonant and that $A_{1}\cup A_{2}\in \mathcal{A}$ where 
$\mathcal{A\in \kappa (}X).$ Because $X$ is consonant, there is a compact
set $K\subseteq A_{1}\cup A_{2}$ such that $\mathcal{O}(K)\subseteq \mathcal{%
A}$. By regularity and compactness, there are finitely many closed sets $C_i$
such that each $C_i$ is a subset of either $A_1$ or $A_2$ and $K\subseteq
\cup_{i=1}^{i=n}C_i$. Therefore, there exist compact subsets $K_{1}$ of $%
A_{1}$ and $K_{2}$ of $A_{2}$ such that $K=K_{1}\cup K_{2}$, so that $%
\mathcal{O}(K_{1})$ and $\mathcal{O}(K_{2})$ are the sought compact filters.
Conversely, if $X$ satisfies $(A^{\ast })$ then for every $A\in \mathcal{A}$
there is a compact filter $\mathcal{F}$ such that $A\in \mathcal{F}$ and $%
\mathcal{O}(\mathcal{F)\subseteq A}$. Because $\mathcal{F}$ is a compact
filter in a regular space, $\mathcal{O}(\mathcal{F)}=\mathcal{O}\left( \adh\mathcal{F}\right) $ and $\adh\mathcal{F}$ is compact (e.g. \cite[%
Proposition 2.2]{DGL.kur}). Therefore, $\mathcal{A}$ is compactly generated
and $X$ is consonant.
\end{proof}

\begin{lemma}
\label{lem:meshclosed}\cite{dolecki.pannonica} If $\mathcal{A\in \kappa }(X)$
and $C$ is a closed subset of $X$ such that $C\in \mathcal{A}^{\#}$ then%
\begin{equation*}
\mathcal{A}\vee C:=\mathcal{O}\left( \{A\cap C:A\in \mathcal{A\}}\right)
\end{equation*}%
is a compact family on $X.$
\end{lemma}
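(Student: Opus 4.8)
The plan is to verify directly the two requirements in the definition of a compact family: that $\mathcal{A}\vee C$ is openly isotone and that it satisfies the finite-union property. Open isotonicity is immediate, since by construction $\mathcal{A}\vee C=\mathcal{O}(\mathcal{A}\vee C)$. The role of the hypothesis $C\in\mathcal{A}^{\#}$ is to guarantee that $\mathcal{A}\vee C$ is a proper family: an open set $U$ lies in $\mathcal{A}\vee C$ exactly when $A\cap C\subseteq U$ for some $A\in\mathcal{A}$, so $\varnothing\in\mathcal{A}\vee C$ would force $A\cap C=\varnothing$ for some $A$, contradicting $C\in\mathcal{A}^{\#}$; thus $\varnothing\notin\mathcal{A}\vee C$ while $X\in\mathcal{A}\vee C$ (as $X\in\mathcal{A}$).

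The heart of the argument is the finite-union property, and the device I would use is to relativize covers by adjoining the open set $X\setminus C$ (open because $C$ is closed) so as to transfer the problem to the compactness of $\mathcal{A}$ itself. Suppose $\mathcal{P}\subseteq\mathcal{O}_{X}$ and $\bigcup\mathcal{P}\in\mathcal{A}\vee C$, so that there is $A\in\mathcal{A}$ with $A\cap C\subseteq\bigcup\mathcal{P}$. Then $A\subseteq(A\cap C)\cup(X\setminus C)\subseteq(\bigcup\mathcal{P})\cup(X\setminus C)$, and since the right-hand side is open and $\mathcal{A}$ is openly isotone, $(\bigcup\mathcal{P})\cup(X\setminus C)\in\mathcal{A}$.

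Next I would apply the compactness of $\mathcal{A}$ to the family $\mathcal{P}\cup\{X\setminus C\}$, whose union is precisely the set just shown to lie in $\mathcal{A}$. This yields a finite subfamily of $\mathcal{A}$ whose union is in $\mathcal{A}$; discarding $X\setminus C$ leaves a finite $\mathcal{P}_{0}\subseteq\mathcal{P}$ with $(\bigcup\mathcal{P}_{0})\cup(X\setminus C)\in\mathcal{A}$ (re-adjoining $X\setminus C$ can only enlarge the union, and $\mathcal{A}$ is isotone). Intersecting with $C$ gives
\[
\left((\bigcup\mathcal{P}_{0})\cup(X\setminus C)\right)\cap C=(\bigcup\mathcal{P}_{0})\cap C\subseteq\bigcup\mathcal{P}_{0},
\]
so with $A':=(\bigcup\mathcal{P}_{0})\cup(X\setminus C)\in\mathcal{A}$ we have $A'\cap C\subseteq\bigcup\mathcal{P}_{0}$, that is, $\bigcup\mathcal{P}_{0}\in\mathcal{A}\vee C$. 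This is the required finite subfamily.

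I expect the only subtle point to be the bookkeeping around $X\setminus C$: one must ensure that, whether or not $X\setminus C$ appears in the finite subfamily extracted from $\mathcal{A}$, re-adjoining it keeps the union inside $\mathcal{A}$, and that the subsequent intersection with $C$ eliminates it cleanly. The open isotonicity of $\mathcal{A}$ is exactly what renders both steps harmless, so no further regularity or separation hypotheses on $X$ should be needed.
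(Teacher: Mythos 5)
Your proof is correct. The paper states this lemma with only a citation to \cite{dolecki.pannonica} and gives no proof of its own, but your argument---adjoining the open set $X\setminus C$ to the cover so as to transfer the compactness of $\mathcal{A}$ to $\mathcal{A}\vee C$, then stripping $X\setminus C$ back off by intersecting with $C$---is the natural one and exactly parallels the paper's own proof of the companion Lemma~\ref{lem:restriction}; the hypothesis $C\in\mathcal{A}^{\#}$ also rules out the degenerate case $\mathcal{P}_{0}=\varnothing$, as you implicitly use.
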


\begin{lemma}
\label{lem:restriction}If $\mathcal{A\in \kappa }(X)$ and $A_{0}\in \mathcal{%
A}$ then%
\begin{equation*}
\mathcal{A}\downarrow A_{0}:=\mathcal{O}\left( \{A\in \mathcal{A}:A\subseteq
A_{0}\}\right)
\end{equation*}%
is a compact family on $X$.
\end{lemma}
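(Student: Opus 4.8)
The plan is to replace the somewhat indirect definition of $\mathcal{A}\downarrow A_{0}$ by a transparent reformulation, and then deduce its compactness directly from that of $\mathcal{A}$ by intersecting with $A_{0}$. The whole argument parallels (and is slightly simpler than) the proof of Lemma~\ref{lem:meshclosed}, with the open set $A_{0}$ playing the role there played by the closed set $C$.

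First I would establish the identity
\[
\mathcal{A}\downarrow A_{0}=\{V\in\mathcal{O}_{X}:A_{0}\cap V\in\mathcal{A}\}.
\]
This uses that every member of a compact family is open and that $\mathcal{A}=\mathcal{O}_{X}(\mathcal{A})$ is openly isotone. Indeed, if $V$ is open with $A_{0}\cap V\in\mathcal{A}$, then $A:=A_{0}\cap V$ satisfies $A\in\mathcal{A}$, $A\subseteq A_{0}$ and $A\subseteq V$, so $V\in\mathcal{O}(\{A\in\mathcal{A}:A\subseteq A_{0}\})=\mathcal{A}\downarrow A_{0}$. Conversely, if $A\in\mathcal{A}$ with $A\subseteq A_{0}$ and $A\subseteq V$, then $A\subseteq A_{0}\cap V$, and since $A_{0}\cap V$ is open, open isotonicity gives $A_{0}\cap V\in\mathcal{O}(\mathcal{A})=\mathcal{A}$. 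This reformulation also makes it clear that $\mathcal{A}\downarrow A_{0}$ is openly isotone (as is anyway forced by the $\mathcal{O}(\cdot)$ in its definition).

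For compactness, suppose $\mathcal{P}\subseteq\mathcal{O}_{X}$ with $\bigcup\mathcal{P}\in\mathcal{A}\downarrow A_{0}$. By the identity this means $A_{0}\cap\bigcup\mathcal{P}=\bigcup_{P\in\mathcal{P}}(A_{0}\cap P)\in\mathcal{A}$. The family $\{A_{0}\cap P:P\in\mathcal{P}\}$ consists of open sets, so the compactness of $\mathcal{A}$ furnishes finitely many $P_{1},\ldots,P_{n}\in\mathcal{P}$ with $\bigcup_{i\leq n}(A_{0}\cap P_{i})\in\mathcal{A}$, that is, $A_{0}\cap\bigcup_{i\leq n}P_{i}\in\mathcal{A}$, equivalently $\bigcup_{i\leq n}P_{i}\in\mathcal{A}\downarrow A_{0}$. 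Thus $\{P_{1},\ldots,P_{n}\}$ is the required finite subfamily.

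I do not expect any genuine obstacle here. The only point that needs a moment's care is the distributivity $A_{0}\cap\bigcup\mathcal{P}=\bigcup_{P\in\mathcal{P}}(A_{0}\cap P)$ together with the observation that each $A_{0}\cap P$ is open, which is precisely what allows the defining compactness property of $\mathcal{A}$ to be applied to the intersected family rather than to $\mathcal{P}$ itself.
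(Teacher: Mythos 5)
Your proof is correct and follows essentially the same route as the paper's: both intersect the open cover with $A_{0}$, apply the compactness of $\mathcal{A}$ to the family $\{A_{0}\cap P:P\in\mathcal{P}\}$, and note that the resulting finite union lies below $A_{0}$ and hence witnesses membership in $\mathcal{A}\downarrow A_{0}$. The explicit identity $\mathcal{A}\downarrow A_{0}=\{V\in\mathcal{O}_{X}:A_{0}\cap V\in\mathcal{A}\}$ is a pleasant packaging but the underlying argument is the same.
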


\begin{proof}
If $\bigcup_{i\in I}O_{i}\in \mathcal{A}\downarrow A_{0}$ then there is $%
A\in \mathcal{A}$ such that $A\subseteq A_{0}$ and $A\subseteq \bigcup_{i\in
I}O_{i}$ so that $A\subseteq \bigcup_{i\in I}\left( O_{i}\cap A_{0}\right) $%
. By compactness of $\mathcal{A}$ there is a finite subset $F$ of $I$ such
that $\bigcup_{i\in F}\left( O_{i}\cap A_{0}\right) \in \mathcal{A}$. But $%
\bigcup_{i\in F}\left( O_{i}\cap A_{0}\right) \subseteq A_{0}$ so that $%
\bigcup_{i\in F}\left( O_{i}\cap A_{0}\right) \in \mathcal{A}\downarrow
A_{0} $ and $\bigcup_{i\in F}O_{i}\in \mathcal{A}\downarrow A_{0}$.
\end{proof}

The following theorem answers \cite[Problem 61]{openproblems2}.

\begin{theorem}
\label{thm:dual_Tikh} If $Z$ is completely regular, then $C_{\kappa }(X,Z)$
is completely regular.
\end{theorem}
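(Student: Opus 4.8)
The plan is to verify complete regularity directly, by producing, for each $f_{0}\in C_{\kappa}(X,Z)$ and each \emph{subbasic} Isbell neighborhood $[\mathcal{A},U]\ni f_{0}$, a continuous $P\colon C_{\kappa}(X,Z)\to[0,1]$ with $P(f_{0})=0$ and $\{P<1\}\subseteq[\mathcal{A},U]$. This suffices: for a basic neighborhood $\bigcap_{i=1}^{n}[\mathcal{A}_{i},U_{i}]$ the function $\max_{i}P_{i}$ works, and every neighborhood contains a basic one, so the continuous real functions generate the topology. Fix a witness $A_{0}\in\mathcal{A}$ with $f_{0}(A_{0})\subseteq U$. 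First I would convert this into a single test function on $Z$: for each $x\in A_{0}$ complete regularity of $Z$ gives $\phi_{x}\in C(Z,[0,1])$ with $\phi_{x}(f_{0}(x))=0$ and $\phi_{x}\equiv 1$ on $Z\setminus U$; setting $O_{x}:=f_{0}^{-}\big(\phi_{x}^{-}[0,\tfrac12)\big)$ one has $A_{0}=\bigcup_{x\in A_{0}}(O_{x}\cap A_{0})\in\mathcal{A}$, so by compactness of $\mathcal{A}$ there are $x_{1},\dots,x_{n}$ with $A_{1}:=\bigcup_{i}(O_{x_{i}}\cap A_{0})\in\mathcal{A}$, $A_{1}\subseteq A_{0}$. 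Then $\min_{i}\phi_{x_{i}}\equiv 1$ off $U$ and is $<1$ on $f_{0}(A_{1})$; composing with a suitable continuous reparametrization of $[0,1]$ flattens this to produce $v\in C(Z,[0,1])$ with $v\equiv 0$ on $f_{0}(A_{1})$ and $\{v<1\}\subseteq U$ (with $v\equiv 1$ off $U$).

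With $v$ in hand, the natural candidate is
\[
Q(g):=\inf_{A\in\mathcal{A}}\ \sup_{x\in A} v\big(g(x)\big).
\]
Because $g\mapsto v\circ g$ is continuous from $C_{\kappa}(X,Z)$ to $C_{\kappa}(X,[0,1])$ (the preimage of $[\mathcal{A},V]$ is $[\mathcal{A},v^{-}(V)]$), and because $\mathcal{A}=\mathcal{O}(\mathcal{A})$ is openly isotone, one checks that $\{Q<t\}=\bigcup_{s<t}[\mathcal{A},\{v<s\}]$, a union of subbasic Isbell-open sets. Hence $Q$ is upper semicontinuous, $Q(f_{0})=0$ (take $A=A_{1}$), and $\{Q<1\}\subseteq[\mathcal{A},\{v<1\}]\subseteq[\mathcal{A},U]$. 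Thus $Q$ would complete the proof \emph{if it were continuous}.

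The hard part is exactly this last point: $Q$ is in general only upper semicontinuous. Indeed, using $A\in\mathcal{A}\Leftrightarrow A^{c}\notin\mathcal{A}^{\#}$ one finds $\{Q>t\}=\bigcup_{\varepsilon>0}\big(C_{\kappa}(X,Z)\setminus[\mathcal{A},\{v<t+\varepsilon\}]\big)$, a union of \emph{closed} sets, which need not be open; and compact-open continuous functions are of no help, since for non-consonant $X$ the neighborhood $[\mathcal{A},U]$ of $f_{0}$ contains no compact-open neighborhood, so such functions only separate $f_{0}$ from complements of compact-open sets. Overcoming this is the main obstacle, and I expect it to require genuinely Isbell-continuous functions built by hand. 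The plan is to exploit the freedom that $\{P<1\}$ need only be \emph{contained in} $[\mathcal{A},U]$: along the decreasing chain $O_{s}:=[\mathcal{A},\{v<s\}]$ one attempts a Urysohn-type assignment $P(g)=\inf\{s:g\in O_{s}\}$, which is continuous precisely when the levels separate, i.e. $\cl O_{s}\subseteq O_{s'}$ for $s<s'$. To force this separation I would interpolate auxiliary continuous maps on $Z$ between consecutive values of $v$, and refine $\mathcal{A}$ by the restriction operation $\mathcal{A}\downarrow A_{1}$ (\lemref{lem:restriction}) and the mesh operation $\mathcal{A}\vee C$ (\lemref{lem:meshclosed}), so that each level $O_{s}$ is trapped between two successive subbasic sets built from a strictly finer threshold; verifying that these insertions indeed yield $\cl O_{s}\subseteq O_{s'}$ is the crux on which the whole argument turns.
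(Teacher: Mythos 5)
Your candidate function $Q(g)=\inf_{A\in \mathcal{A}}\sup_{x\in A}v(g(x))$ is exactly the function $F$ of the paper's proof (your $v$ plays the role of the $h$ that the paper obtains from Lemma \ref{funct:comp_sep}, applied there to the compact family $\mathcal{O}_{Z}(f(\mathcal{A}))$ on $Z$), and your verifications that $Q(f_{0})=0$, that $\{Q<1\}\subseteq \lbrack \mathcal{A},U]$, and that $Q$ is upper semicontinuous agree with the paper. The genuine gap is what you do next: you assert that $Q$ is ``in general only upper semicontinuous'' and abandon it, whereas the paper claims and argues that this very function is continuous. Writing $\{Q>t\}$ as a union of closed sets is not an argument that it fails to be open, and you exhibit no counterexample; conversely, you do not prove lower semicontinuity either. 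The tool you are missing is the grill: using $A\in \mathcal{A}\Leftrightarrow A^{c}\notin \mathcal{A}^{\#}$ on the open sets $\{x:v(g(x))<t\}$ one gets the dual representation
\begin{equation*}
Q(g)=\sup_{H\in \mathcal{A}^{\#}}\;\inf_{x\in H}v(g(x)),
\end{equation*}
and then, for $s<t<Q(g)$, the closed set $H:=\{x:v(g(x))\geq t\}$ belongs to $\mathcal{A}^{\#}$, the meshed family $\mathcal{A}\vee H$ is compact by Lemma \ref{lem:meshclosed}, and $\left[ \mathcal{A}\vee H,v^{-}(s,1]\right] $ is an Isbell-open neighborhood of $g$ on which the paper's estimate keeps the value above $s$. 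In short, lower semicontinuity is obtained by meshing $\mathcal{A}$ with closed sets of its grill, not by refining thresholds in $Z$; neither $\mathcal{A}^{\#}$ nor $\mathcal{A}\vee H$ appears anywhere in your proposal.

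Your fallback --- a Urysohn-type $P(g)=\inf \{s:g\in O_{s}\}$ along a chain satisfying $\cl O_{s}\subseteq O_{s^{\prime }}$ --- is, by your own account, left unverified at precisely its decisive step, so as written the proposal shows only that $f_{0}$ can be separated from the complement of $[\mathcal{A},U]$ by an upper semicontinuous $[0,1]$-valued function, which is not complete regularity. Two closing remarks. First, your instinct that lower semicontinuity is the delicate point is sound: it is where all of the work in the paper's argument is concentrated, and the interchange $\sup_{A}\inf_{x\in A\cap H}\leq \inf_{A}\sup_{x\in A\cap H}$ used there itself requires care, since a compact family need not be a filter and distinct witnesses $A$ need not meet inside $H$; a careful write-up should first localize $\mathcal{A}$ (e.g.\ via Lemma \ref{lem:restriction}) before running the estimate. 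Second, identifying the obstacle is not the same as overcoming it: until the separation $\cl O_{s}\subseteq O_{s^{\prime }}$ is actually established, there is no proof.
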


\begin{proof}
Let $f\in \lbrack \mathcal{A},O]$ where $\mathcal{A}\in \kappa (X)$ and $O$
is $Z$-open. As $\mathcal{A}$ is compact and $f$ continuous, $\mathcal{O}%
_{Z}(f(\mathcal{A}))$ is compact, and since $Z$ is completely regular, by
Lemma \ref{funct:comp_sep}, there is $A\in \mathcal{A}$ and $h\in C(Z,[0,1])$
such that $h(f(A))=\{0\}$ and $h(Z\backslash O)=\{1\}$. Define%
\begin{equation*}
F(g):=\inf_{A\in \mathcal{A}}\sup_{x\in A}h(g(x))=\sup_{H\in \mathcal{A}%
^{\#}}\inf_{x\in H}h(g(x))
\end{equation*}%
for each $g\in C(X,Z)$. Then $F(f)=0$ and $F(g)=1$ for each $g\notin \lbrack 
\mathcal{A},O]$. Moreover, $F:C_{\kappa }(X,Z)\rightarrow \lbrack 0,1]$ is
continuous. To see that $F^{-}\left( [0,r)\right) $ is open for each $r\in
\lbrack 0,1]$, notice that $F(g)<r$ if and only if there is $A_{r}\in 
\mathcal{A}$ such that $g(A_{r})\subset \lbrack 0,r),$ that is, if and only
if $g\in \left[ \mathcal{A},h^{-}\left( [0,r)\right) \right] $. On the other
hand, if $0\leq s<1$ and $s<F(g)$, then, by the second equality, there exist 
$s<t<F(g)$ and a closed set $H\in \mathcal{A}^{\#}$ such that $t\leq h(g(x))$
for each $x\in H$, thus $g(H)\subset h^{-}(s,1]$. By Lemma \ref%
{lem:meshclosed}, $\mathcal{A}\vee H$ is compact, and if an open set
includes $H$ then it belongs to $\mathcal{A}\vee H$, in particular $%
g^{-}h^{-}(s,1]\in \mathcal{A}\vee H$, that is, $g$ belongs to the open set $%
\left[ \mathcal{A}\vee H,h^{-}(s,1]\right] .$ If now $b\in \left[ \mathcal{A}%
\vee H,h^{-}(s,1]\right] $, then there is $A\in \mathcal{A}$ such that $%
h(b(A\cap K))\subset (s,1]$, hence 
\begin{equation*}
s<\sup\limits_{A\in \mathcal{A}}\inf_{x\in A\cap H}h(b(x))\leq \inf_{A\in 
\mathcal{A}}\sup_{x\in A\cap H}h(b(x))\leq \inf_{A\in \mathcal{A}}\sup_{x\in
A}h(b(x))=F(b).
\end{equation*}
\end{proof}

As $[\mathcal{A},-U]=-[\mathcal{A},U]$ for every $U\subset \mathbb{R}$ and
each compact family $\mathcal{A}$, the inversion is a homeomorphism in $%
C_{\kappa }(X,\mathbb{R})$. More generally,

\begin{proposition}
\label{prop:scalarmultIsbell}Multiplication by scalars is jointly continuous
for the Isbell topology.
\end{proposition}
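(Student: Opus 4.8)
The plan is to verify that the scalar multiplication map $\mu:\mathbb{R}\times C_{\kappa}(X,\mathbb{R})\to C_{\kappa}(X,\mathbb{R})$, $\mu(\lambda,f)=\lambda f$, pulls back each subbasic Isbell-open set $[\mathcal{A},U]$ to an open set. Fix a point $(\lambda_{0},f_{0})$ with $\lambda_{0}f_{0}\in[\mathcal{A},U]$; then there is $A_{0}\in\mathcal{A}$ with $\lambda_{0}f_{0}(A_{0})\subseteq U$, and $A_{0}$ is open since $\mathcal{A}=\mathcal{O}_{X}(\mathcal{A})$. The goal is to produce an open interval $V\ni\lambda_{0}$ and a bounded open set $W\subseteq\mathbb{R}$, together with some $A_{1}\in\mathcal{A}$, such that $f_{0}(A_{1})\subseteq W$ (so that $f_{0}\in[\mathcal{A},W]$), $\overline{W}$ is compact, and $V\cdot\overline{W}\subseteq U$. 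Granting this, for every $\lambda\in V$ and every $g\in[\mathcal{A},W]$ one chooses $A\in\mathcal{A}$ with $g(A)\subseteq W$ and gets $(\lambda g)(A)\subseteq V\cdot W\subseteq V\cdot\overline{W}\subseteq U$, whence $\lambda g\in[\mathcal{A},U]$. Thus $V\times[\mathcal{A},W]$ is a neighborhood of $(\lambda_{0},f_{0})$ mapped into $[\mathcal{A},U]$, and intersecting finitely many such neighborhoods handles the basic open sets, giving joint continuity.

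The main difficulty is that $\lambda_{0}f_{0}(A_{0})\subseteq U$ does \emph{not} force $\lambda_{0}\,\overline{f_{0}(A_{0})}\subseteq U$, so to run the multiplication uniformly one must first replace $A_{0}$ by a member of $\mathcal{A}$ whose image has compact closure sitting well inside $U$. I would build $W$ in two cases. If $\lambda_{0}\neq 0$, set $U':=\lambda_{0}^{-1}U$, an open set with $f_{0}(A_{0})\subseteq U'$, and cover the open set $A_{0}$ by the open subsets $A_{0}\cap f_{0}^{-1}(I)$, where $I$ ranges over bounded open intervals with $\overline{I}\subseteq U'$. These cover $A_{0}$, so their union is $A_{0}\in\mathcal{A}$, and compactness of $\mathcal{A}$ yields finitely many intervals $I_{1},\dots,I_{n}$ whose corresponding union $A_{1}\in\mathcal{A}$ satisfies $A_{1}\subseteq A_{0}$ and $f_{0}(A_{1})\subseteq I_{1}\cup\dots\cup I_{n}$; hence $\overline{f_{0}(A_{1})}\subseteq\overline{I_{1}}\cup\dots\cup\overline{I_{n}}\subseteq U'$ is compact. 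By compactness of $\overline{f_{0}(A_{1})}$ inside the open set $U'$, pick a bounded open $W$ with $\overline{f_{0}(A_{1})}\subseteq W\subseteq\overline{W}\subseteq U'$, so that $\lambda_{0}\overline{W}\subseteq U$. If $\lambda_{0}=0$, then $\overline{0}=\lambda_{0}f_{0}\in[\mathcal{A},U]$ forces $0\in U$; here I invoke Proposition \ref{prop:bounded} to obtain $A_{1}\in\mathcal{A}$ with $f_{0}(A_{1})$ bounded, take any bounded open $W\supseteq f_{0}(A_{1})$, and note $\lambda_{0}\overline{W}=\{0\}\subseteq U$. (Lemma \ref{lem:restriction} may be used first to pass to $\mathcal{A}\downarrow A_{0}$ if one wishes the chosen $A_{1}$ to lie below $A_{0}$.)

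In both cases $\overline{W}$ is compact and $\lambda_{0}\overline{W}\subseteq U$, that is, $\{\lambda_{0}\}\times\overline{W}$ lies in the open preimage of $U$ under the continuous real multiplication $(\lambda,t)\mapsto\lambda t$. A standard tube-lemma argument then furnishes an open interval $V\ni\lambda_{0}$ with $V\cdot\overline{W}\subseteq U$, supplying exactly the data required in the first paragraph and hence completing the proof. I expect the covering/compactness step producing $A_{1}$, together with the separate Proposition \ref{prop:bounded}-based treatment of $\lambda_{0}=0$, to be the only substantive points; the rest is the elementary joint continuity of scalar multiplication on $\mathbb{R}$.
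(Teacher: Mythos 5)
Your proof is correct, and its overall architecture coincides with the paper's: split into the cases $\lambda_{0}=0$ and $\lambda_{0}\neq 0$, handle the first via Proposition~\ref{prop:bounded}, and in the second replace $A_{0}$ by a member of $\mathcal{A}$ whose image has compact closure sitting well inside the target open set, after which a uniform (tube-lemma) estimate in $\mathbb{R}$ finishes the argument. The one genuine difference is how that compact set is produced. The paper observes that $\mathcal{O}\left( (rf)(\mathcal{A})\right)$ is a compact family on $\mathbb{R}$ and invokes the consonance of $\mathbb{R}$ to extract a compact $K\subseteq O$ with $(rf)(A_{0})\subseteq B(K,\varepsilon )\subseteq B(K,2\varepsilon )\subseteq O$; you instead run the covering argument by hand on $X$, covering $A_{0}$ by the open sets $A_{0}\cap f_{0}^{-1}(I)$ for bounded intervals $I$ with $\overline{I}\subseteq \lambda _{0}^{-1}U$ and using compactness of $\mathcal{A}$ to extract $A_{1}\in \mathcal{A}$ with $\overline{f_{0}(A_{1})}$ compact in $\lambda _{0}^{-1}U$. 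Your route is more self-contained (it does not cite consonance of $\mathbb{R}$ as a black box, and in effect reproves the instance of it that is needed), at the cost of a slightly longer covering step; the paper's route is shorter but leans on the transfer of compactness to the image family and on local compactness of the line. Both are complete and correct.
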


\begin{proof}
Let $f\in C(X,\mathbb{R})$ and $r\in \mathbb{R}$ be such that $rf\in \lbrack 
\mathcal{A},O]$, where $\mathcal{A}$ is a compact family on $X$ and $O$ is
an open subset of $\mathbb{R}$. If $r=0$ then it is enough to consider $%
O=B(0,\varepsilon )$ with $\varepsilon >0$. By Proposition \ref{prop:bounded}
there exist $A_{0}\in \mathcal{A}$ and $R>0$ such that $f(A_{0})\subset
B(0,R)$ and thus $f(A)\subset B(0,R)$ for each basic element $A$ of $%
\mathcal{A}_{0}:=\mathcal{A}\downarrow A_{0}$. Therefore $f\in \lbrack 
\mathcal{A}_{0},B(0,R)]$ and $B(0,\frac{\varepsilon }{R})[\mathcal{A}%
_{0},B(0,R)]\subset \lbrack \mathcal{A},O]$. Let $\left\vert r\right\vert >0$%
. Since $\mathcal{O}\left( \left( rf\right) (\mathcal{A})\right) $ is a
compact family of the consonant space $\mathbb{R},$ there exists a compact
subset $K$ of $O$ such that $\mathcal{O}_{\mathbb{R}}(K)\subset \mathcal{O}%
\left( \left( rf\right) (\mathcal{A})\right) $, hence there exist $A_{0}\in 
\mathcal{A}$ and $\varepsilon >0$ such that $\left( rf\right) (A_{0})\subset
B(K,\varepsilon )\subset B(K,2\varepsilon )\subset O$. If $\mathcal{A}_{0}:=%
\mathcal{A}\downarrow A_{0}$, then $f(A)\subset \frac{1}{r}B(K,\varepsilon
)\ $for a base of elements $A$ of $\mathcal{A}_{0}$, hence $f\in \left[ 
\mathcal{A}_{0},\frac{1}{r}B(K,\varepsilon )\right] $. On the other hand,
there is $\delta >0$ such that $B\left( 1,\frac{\delta }{\left\vert
r\right\vert }\right) B(K,\varepsilon )\subset B(K,2\varepsilon )$ and thus $%
B(r,\delta )\left[ \mathcal{A}_{0},\frac{1}{r}B(K,\varepsilon )\right]
\subset O$.
\end{proof}

\begin{corollary}
If $C_{\kappa }(X,\mathbb{R})$ is a topological group then it is a
topological vector space.
\end{corollary}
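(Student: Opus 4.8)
The plan is to use the preceding results to reduce the claim to checking that once we have a group topology, joint continuity of scalar multiplication together with the additive structure upgrades this to a topological vector space topology. Since the paper just established in Proposition \ref{prop:scalarmultIsbell} that multiplication by scalars is jointly continuous for the Isbell topology, the corollary should follow almost immediately from the definition of a topological vector space.

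First I would recall the standard definition: a topology on a real vector space is a vector space topology precisely when (i) the addition $(f,g)\mapsto f+g$ is jointly continuous, and (ii) the scalar multiplication $(r,f)\mapsto rf$ is jointly continuous. Now suppose $C_{\kappa}(X,\mathbb{R})$ is a topological group under addition. By definition of a topological (abelian) group, addition is jointly continuous and inversion $f\mapsto -f$ is continuous, so condition (i) is granted by hypothesis. For condition (ii), Proposition \ref{prop:scalarmultIsbell} gives exactly the joint continuity of scalar multiplication. Hence both defining conditions hold and $C_{\kappa}(X,\mathbb{R})$ is a topological vector space.

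The only thing to verify with any care is that the underlying additive structure supplied by the ``topological group'' hypothesis is the genuine vector-space addition on $C(X,\mathbb{R})$ (rather than some unrelated group operation), which it plainly is: the group operation in question is pointwise addition of functions, so the neutral element is $\overline{0}$ and the inverse is the pointwise negation. I would note that the continuity of inversion in $C_{\kappa}(X,\mathbb{R})$ was in any case already remarked before Proposition \ref{prop:scalarmultIsbell} via the identity $[\mathcal{A},-U]=-[\mathcal{A},U]$.

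I do not anticipate a genuine obstacle here; the corollary is essentially a bookkeeping consequence of the two joint-continuity statements. The substantive content lives entirely in Proposition \ref{prop:scalarmultIsbell}, which does the real work of establishing joint continuity of scalar multiplication; the corollary merely packages it with the assumed additive group continuity to invoke the definition of a topological vector space. I would therefore keep the write-up to one or two sentences, simply citing the definition and Proposition \ref{prop:scalarmultIsbell}.
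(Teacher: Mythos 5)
Your proof is correct and is exactly the intended argument: the paper states this as an immediate corollary of Proposition \ref{prop:scalarmultIsbell}, since a group topology supplies joint continuity of addition and that proposition supplies joint continuity of scalar multiplication, which together constitute the definition of a topological vector space.
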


\section{Structure of $C_{\protect\kappa }(X,\mathbb{R})$ at the zero
function}

As usual, if $A$ and $B$ are subsets of a group, $A+B:=\{a+b:a\in A,b\in B\}$
and if $\mathcal{A}$ and $\mathcal{B}$ are two families of subsets, $%
\mathcal{A}+\mathcal{B}:=\{A+B:A\in \mathcal{A},B\in \mathcal{B}\}$.

As we have mentioned, a topology on an abelian group is a group topology if
and only if translations are continuous and $\mathcal{N}(0)+\mathcal{N}%
(0)\geq \mathcal{N}(0).$ In this subsection, we investigate the latter
property, that is, 
\begin{equation}
\mathcal{N}_{\kappa }(\overline{0})+\mathcal{N}_{\kappa }(\overline{0})\geq 
\mathcal{N}_{\kappa }(\overline{0}),  \label{eq:at0}
\end{equation}%
for the space $C_{\kappa }(X,\mathbb{R})$. If $(p_{n})$ is a decreasing
sequence of positive numbers that tends to zero, then%
\begin{equation*}
\left[ \bigcap_{i=1}^{n}\mathcal{A}_{i},\left(
-\max_{i=1}^{n}p_{i},\max_{i=1}^{n}p_{i}\right) \right] \subseteq
\bigcap_{i=1}^{n}\left[ \mathcal{A}_{i},\left( -p_{i},p_{i}\right) \right] ,
\end{equation*}%
and thus $\mathcal{N}_{\kappa }(\overline{0})$ has a filter base of the form%
\begin{equation}
\left\{ \left[ \mathcal{A},\left( -p_{n},p_{n}\right) \right] :\mathcal{A}%
\in \kappa (X),n\in \mathbb{N}\right\} ,  \label{eq:kat0}
\end{equation}%
because a finite intersection of compact families is compact.

We call a topological space $X$ \emph{infraconsonant }if for every compact
family $\mathcal{A}$ on $X$ there is a compact family $\mathcal{B}$ such
that $\mathcal{B\vee B}:=\{B\cap C:B\in \mathcal{B},C\in \mathcal{B}\}$ is a
(not necessarily compact) subfamily of $\mathcal{A}$. Note that if $X$ is
consonant then every compact family includes a compact filter of the form $%
\mathcal{O}(K)$ for a compact set $K$. Taking $\mathcal{B=O}(K)$ gives
infraconsonance, so that every consonant space is infraconsonant.

\begin{theorem}
\label{th:at0} Let $\left( G,+\right) $ be an abelian topological group. If $%
X$ is infraconsonant, then the addition is continuous at $\overline{0}$ in $%
C_{\kappa }(X,G)$. Moreover if $X$ is completely regular, then the addition
is continuous at $\overline{0}$ in $C_{\kappa }(X,\mathbb{R})$ if and only
if $X$ is infraconsonant.
\end{theorem}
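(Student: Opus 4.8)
The plan is to treat the two implications separately, noting that the forward implication (infraconsonance $\Rightarrow$ continuity of addition at $\overline{0}$) works for an arbitrary abelian topological group $G$ and needs no separation axiom, whereas the converse, which does require complete regularity, is the substantive half. Throughout I would use that $\mathcal{N}_{\kappa}(\overline{0})$ has the base (\ref{eq:kat0}), and I would first record the convenient reduction that every basic neighborhood of $\overline{0}$ in $C_{\kappa}(X,G)$ refines to one of the form $[\mathcal{A},W]$, with $\mathcal{A}$ a compact family and $W$ a neighborhood of $0$ in $G$. Indeed, $[\bigcap_{i}\mathcal{A}_{i},\bigcap_{i}W_{i}]\subseteq\bigcap_{i}[\mathcal{A}_{i},W_{i}]$, and a finite intersection of compact families is compact.

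For the forward implication, fix such a neighborhood $[\mathcal{A},W]$. Since $G$ is a topological group, choose a neighborhood $V$ of $0$ with $V+V\subseteq W$, and use infraconsonance to find a compact family $\mathcal{B}$ with $\mathcal{B}\vee\mathcal{B}\subseteq\mathcal{A}$. I would then check $[\mathcal{B},V]+[\mathcal{B},V]\subseteq[\mathcal{A},W]$: if $f(B)\subseteq V$ and $g(C)\subseteq V$ with $B,C\in\mathcal{B}$, then $B\cap C\in\mathcal{B}\vee\mathcal{B}\subseteq\mathcal{A}$ and $(f+g)(B\cap C)\subseteq f(B)+g(C)\subseteq V+V\subseteq W$, so $f+g\in[\mathcal{A},W]$. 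This yields (\ref{eq:at0}), hence continuity of addition at $\overline{0}$; specializing $G=\mathbb{R}$ gives the ``if'' part of the second assertion.

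For the converse, assume $X$ completely regular and addition continuous at $\overline{0}$ in $C_{\kappa}(X,\mathbb{R})$, and fix a compact family $\mathcal{A}=\mathcal{O}(\mathcal{A})$. Continuity applied to the neighborhood $[\mathcal{A},(-1,1)]$, together with the base (\ref{eq:kat0}), produces (after intersecting the two compact families and taking the smaller radius) a single compact family $\mathcal{B}$ and $r>0$ with $[\mathcal{B},(-r,r)]+[\mathcal{B},(-r,r)]\subseteq[\mathcal{A},(-1,1)]$. The goal is then to show that this very $\mathcal{B}$ satisfies $\mathcal{B}\vee\mathcal{B}\subseteq\mathcal{A}$, which is infraconsonance. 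So I would take $B,C\in\mathcal{B}$, assume for contradiction that $B\cap C\notin\mathcal{A}$ (equivalently, since $B\cap C$ is open and $\mathcal{A}=\mathcal{O}(\mathcal{A})$, that $(B\cap C)^{c}\in\mathcal{A}^{\#}$), and try to build $f,g\in[\mathcal{B},(-r,r)]$ with $f+g\notin[\mathcal{A},(-1,1)]$.

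The hard part is exactly here: I want $f$ small on some element of $\mathcal{B}$ yet with $f+g\geq 1$ on $(B\cap C)^{c}=B^{c}\cup C^{c}$, but a function cannot be small on all of $B$ and jump to a large value on $B^{c}$ without violating continuity across $\partial B$. The resolution is to buy a buffer \emph{inside} $B$ from complete regularity: by the corollary to \lemref{funct:comp_sep}, the compact family $\mathcal{B}$ has a base of interiors of zero sets, so I may pick $h_{B}\in C(X,[0,1])$ with $B\supseteq\{h_{B}<\frac{1}{2}\}\supseteq B_{0}:=\intr\{h_{B}=0\}\in\mathcal{B}$, and likewise $h_{C},C_{0}$. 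Setting $f:=\min(1,2h_{B})$ and $g:=\min(1,2h_{C})$, continuity is automatic, $f$ vanishes on $B_{0}\in\mathcal{B}$ (so $f\in[\mathcal{B},(-r,r)]$, witnessed by $B_{0}$) and equals $1$ on $B^{c}\subseteq\{h_{B}\geq\frac{1}{2}\}$, and symmetrically for $g$. Since $f,g\geq 0$, at every point of $(B\cap C)^{c}$ one of them equals $1$, so $f+g\geq 1$ there, whence $\{|f+g|<1\}\subseteq B\cap C$. If $f+g$ belonged to $[\mathcal{A},(-1,1)]$ there would be $A\in\mathcal{A}$ with $A\subseteq\{|f+g|<1\}\subseteq B\cap C$, forcing $B\cap C\in\mathcal{A}$ because $\mathcal{A}=\mathcal{O}(\mathcal{A})$ --- contradicting the choice of $B,C$ and (\ref{eq:kat0})'s consequence above. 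Hence $\mathcal{B}\vee\mathcal{B}\subseteq\mathcal{A}$ and $X$ is infraconsonant.
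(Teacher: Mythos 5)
Your proof is correct and follows essentially the same route as the paper: the forward implication is identical, and your converse is just the contrapositive of the paper's argument, using the same key ingredient (Lemma~\ref{funct:comp_sep}, via its corollary) to manufacture functions vanishing on an element of $\mathcal{B}$ well inside $B$ (resp.\ $C$) while equal to $1$ on $B^{c}$ (resp.\ $C^{c}$), so that the sum escapes $[\mathcal{A},(-1,1)]$ whenever $B\cap C\notin\mathcal{A}$. The only cosmetic difference is that the paper fixes a family $\mathcal{A}$ witnessing non-infraconsonance and shows no pair of compact families can work, whereas you fix the $\mathcal{B}$ supplied by continuity and verify $\mathcal{B}\vee\mathcal{B}\subseteq\mathcal{A}$ directly.
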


\begin{proof}
Assume that $X$ is infraconsonant. Let $\mathcal{A\in \kappa }(X)$ and $V\in 
\mathcal{N}_{G}(0).$ By infraconsonance, there exist$\mathcal{\ }$a compact
subfamily $\mathcal{B}$ of $\mathcal{A}$ such that $\mathcal{B}\vee \mathcal{%
B}\subseteq \mathcal{A}$. If $W\in \mathcal{N}_{G}(0)$ such that $%
W+W\subseteq V$, then $[\mathcal{B},W]+[\mathcal{B},W]\subseteq \lbrack 
\mathcal{A},V]$, which proves (\ref{eq:at0})$.$

Conversely, assume that $X$ is not infraconsonant. Let $\mathcal{A}$ be a
compact family witnessing the definition of non infraconsonance. Note that $%
\mathcal{B\vee C\nsubseteq A}$ for every pair of compact families $\mathcal{B%
}$ and $\mathcal{C}$ for otherwise $\mathcal{D}=\mathcal{B\cap C}$ would be
a compact subfamily of $\mathcal{A}$ such that $\mathcal{D\vee D\subseteq A}$%
. Let $V=\left( -\frac{1}{2},\frac{1}{2}\right) .$ We claim that for any
pair $(\mathcal{B},\mathcal{C)}$ of compact families and any pair $(U,W)$ of 
$\mathbb{R}$-neighborhood of $0,$ $[\mathcal{B},U]+[\mathcal{C},W]\nsubseteq
\lbrack \mathcal{A},V]$. Indeed, there exist $B\in \mathcal{B}$ and $C\in 
\mathcal{C}$ such that $B\cap C\notin \mathcal{A}$. Then $B^{c}\cup C^{c}\in 
\mathcal{A}^{\#}$. Moreover, $B^{c}\notin \mathcal{B}^{\#}$ so that by Lemma %
\ref{funct:comp_sep}, there exist $B_{1}\in \mathcal{B}$ and $f\in C(X,%
\mathbb{R})$ such that $f(B_{1})=\{0\}$ and $f(B^{c})=\{1\}.$ Similarly, $%
C^{c}\notin \mathcal{C}$ so that there exist $C_{1}\in \mathcal{C}$ and $%
g\in C(X,\mathbb{R})$ such that $g(C_{1})=\{0\}$ and $g(C^{c})=\{1\}.$ Then $%
f+g\in \lbrack \mathcal{B},U]+[\mathcal{C},W]$ but $1\in (f+g)(A)$ for all $%
A\in \mathcal{A}$ so that $f+g\notin \lbrack \mathcal{A},V]$.
\end{proof}

Complete regularity cannot be relaxed (to regularity) in Theorem \ref{th:at0}%
.

\begin{example}
There exist regular non-infraconsonant spaces $X$, for which the addition is
jointly continuous at $\overline{0}$ in $C_{\kappa }(X,\mathbb{R})$. In \cite%
{HH} Herrlich builds a regular space, on which each continuous function is
constant. To this purpose, for each regular space $Y$ he constructs a
regular space $H(Y)$ such that $Y$ is closed in $H(Y)$, and each $f\in
C\left( H(Y),\mathbb{R}\right) $ is constant on $Y$. Define $%
H^{0}(Y):=Y,H^{n+1}(Y):=H\left( H^{n}\left( Y\right) \right) $ and $%
X:=\bigcup\nolimits_{n<\omega }H^{n}(Y)$ with the finest topology for which
all the injections are continuous. Then each continuous
(real-valued) function on $X$ is constant. Moreover it can be shown that $X$ is regular. This fact is stated in \cite{HH} in case where $Y$ is a singleton, but is true
for an arbitrary regular space $Y$. Let $Y$ be a regular non-infraconsonant
space, for instance the space from Example \ref{ex:Arens}. As all continuous
functions on $X$ are constant, the continuity of the (joint) addition on $%
C_{\kappa }(X,\mathbb{R})$ follows from the continuity of the addition on $%
\mathbb{R}$. If $X$ were infraconsonant, then its closed subset $Y$ would be
infraconsonant, in contradiction with the assumption.
\end{example}

As we have mentioned, $C_{\kappa }(X,\$^{\ast })$ is the lattice of open
subsets of $X$ endowed with the Scott topology, in which open sets are
exactly the compact families of open subsets of $X$. Dually, $C_{\kappa
}(X,\$)$ is the set of closed subsets of $X$ endowed with the \emph{upper
Kuratowski topology, }in which $\mathcal{F}$ is open if the family $\mathcal{%
F}_{c}=\{X\setminus F:F\in \mathcal{F\}}$ is compact. The following was
prompted by a conversation with Ahmed Bouziad (University of Rouen) in June
2008 (in Erice), who asked us if infraconsonance was related to the joint
continuity of the union operation on $C_{\kappa }(X,\$)$.

\begin{lemma}
\label{lem:everywhere} If $X$ is regular and infraconsonant, then for every $%
\mathcal{A}\in \kappa (X)$ and every $A\in \mathcal{A},$ there is $\mathcal{C%
}\in \kappa (X)$ such that $A\in \mathcal{C}$ and $\mathcal{C\vee C\subseteq
A}$.
\end{lemma}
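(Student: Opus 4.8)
The plan is to build $\mathcal{C}$ by first localizing $\mathcal{A}$ below $A$, applying infraconsonance there, and then meeting the resulting family with a suitable \emph{closed} set sitting inside $A$. The reason for intersecting with a closed set is that this is precisely the operation under which Lemma~\ref{lem:meshclosed} guarantees compactness; regularity is what will let me squeeze such a closed set inside the open set $A$ while still trapping an element of $\mathcal{A}$. Throughout I may assume $\varnothing\notin\mathcal{A}$, the case $\varnothing\in\mathcal{A}$ (so $\mathcal{A}=\mathcal{O}_X$) being settled trivially by $\mathcal{C}=\mathcal{O}_X$.

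First I would use regularity together with the compactness of $\mathcal{A}$ to find $A'\in\mathcal{A}$ with $\cl A'\subseteq A$. Since $X$ is regular, $A=\bigcup\{V:V\text{ open},\ \cl V\subseteq A\}\in\mathcal{A}$, so compactness of $\mathcal{A}$ yields finitely many such $V_1,\dots,V_n$ with $A':=V_1\cup\cdots\cup V_n\in\mathcal{A}$ and $\cl A'=\cl V_1\cup\cdots\cup\cl V_n\subseteq A$. Put $D:=\cl A'$, a closed subset of $A$ that contains the member $A'$ of $\mathcal{A}$.

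Next I would apply infraconsonance not to $\mathcal{A}$ but to $\mathcal{A}\downarrow A'$, which is compact by Lemma~\ref{lem:restriction} and contains $A'$. This produces a compact family $\mathcal{B}$ with $\mathcal{B}\vee\mathcal{B}\subseteq\mathcal{A}\downarrow A'$; in particular every $B\in\mathcal{B}$ (being $B\cap B$) contains some $A_B\in\mathcal{A}$ with $A_B\subseteq A'$. Localizing to $A'$ rather than to $A$ is the essential point: it ties the members of $\mathcal{B}$ to nonempty sets lying inside $A'\subseteq D$. I then set $\mathcal{C}:=\mathcal{B}\vee D=\mathcal{O}(\{B\cap D:B\in\mathcal{B}\})$ and check the three requirements. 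For compactness, each $B\in\mathcal{B}$ meets $D$ because it contains the nonempty $A_B\subseteq A'\subseteq D$, so $D\in\mathcal{B}^{\#}$ and Lemma~\ref{lem:meshclosed} applies. For $A\in\mathcal{C}$, note that every generator satisfies $B\cap D\subseteq D\subseteq A$, so $A$ is an open superset of a generator (e.g.\ $X\cap D=D$, since $X\in\mathcal{B}$). For $\mathcal{C}\vee\mathcal{C}\subseteq\mathcal{A}$, take $C\supseteq B_1\cap D$ and $C'\supseteq B_2\cap D$; then $C\cap C'\supseteq B_1\cap B_2\cap D$, and since $B_1\cap B_2\in\mathcal{A}\downarrow A'$ there is $A''\in\mathcal{A}$ with $A''\subseteq A'\subseteq D$ and $A''\subseteq B_1\cap B_2$, whence $A''\subseteq C\cap C'$ and $C\cap C'\in\mathcal{A}$ by open isotonicity of $\mathcal{A}$.

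The main obstacle is exactly the tension I exploit above: we need the \emph{open} set $A$ to belong to $\mathcal{C}$, yet the only compactness-preserving way to cut a compact family down is Lemma~\ref{lem:meshclosed}, which works with \emph{closed} sets, and intersecting directly with $A$ would destroy compactness along the boundary of $A$. Interposing the closed set $\cl A'$ with $A'\in\mathcal{A}$, and simultaneously localizing infraconsonance to $\mathcal{A}\downarrow A'$ so that all pairwise intersections land inside $A'$, is the crux; once this is set up, the three verifications are routine.
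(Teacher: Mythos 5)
Your proof is correct and follows essentially the same route as the paper's: choose $A_0\in\mathcal{A}$ with $\cl A_0\subseteq A$ by regularity, apply infraconsonance to $\mathcal{A}\downarrow A_0$ (Lemma~\ref{lem:restriction}), and take $\mathcal{C}=\mathcal{B}\vee\cl A_0$, which is compact by Lemma~\ref{lem:meshclosed}. Your write-up just supplies a few details the paper leaves implicit (e.g.\ that $\cl A_0\in\mathcal{B}^{\#}$ and the degenerate case $\varnothing\in\mathcal{A}$).
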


\begin{proof}
Assume that $X$ is infraconsonant and regular. If $\mathcal{A}$ is a compact
family on $X$, then for each element $A$ of $\mathcal{A}$ there is $A_{0}\in 
\mathcal{A}$ such that $\cl A_{0}\subset A$. The family $\mathcal{A}%
_{1}=\mathcal{A}\downarrow A_{0}$ is compact by Lemma \ref{lem:restriction}
so that there is a compact family $\mathcal{B}$ such that $\mathcal{B\vee
B\subseteq A}_{1}$. For each $B_{1}$ and $B_{2}$ in $\mathcal{B}$, there is $%
B\in \mathcal{A},$ $B\subseteq A_{0}$ such that $B\subseteq B_{1}\cap B_{2}.$
Therefore the family $\mathcal{B}_{1}=\mathcal{B}\vee \cl A_{0}$
contains $A$, satisfies $\mathcal{B}_{1}\mathcal{\vee B}_{1}\mathcal{%
\subseteq A}_{1}\mathcal{\subseteq A}$, and is compact by Lemma \ref%
{lem:meshclosed}.
\end{proof}

We shall consider binary maps: the intersection $\cap $ and the union $\cup $%
, defined by $\cap (A,B):=A\cap B$ and $\cup (A,B):=A\cup B$.

\begin{proposition}
\label{prop:join}Let $X$ be a regular topological space. The following are
equivalent:

\begin{enumerate}
\item $X$ is infraconsonant;

\item The intersection $\cap :C_{\kappa }(X,\$^*)\times C_{\kappa
}(X,\$^*)\rightarrow C_{\kappa }(X,\$^*)$ is (jointly) continuous for the
Scott topology;

\item The union $\cup :C_{\kappa }(X,\$)\times C_{\kappa }(X,\$)\rightarrow
C_{\kappa }(X,\$)$ is (jointly) continuous for the upper Kuratowski topology.
\end{enumerate}
\end{proposition}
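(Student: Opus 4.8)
The plan is to prove $(2)\Leftrightarrow(3)$ by duality and then to establish $(1)\Leftrightarrow(2)$ directly, with Lemma \ref{lem:everywhere} carrying the weight of $(1)\Rightarrow(2)$. For $(2)\Leftrightarrow(3)$ I would use the complementation map $c:U\mapsto X\setminus U$, a bijection between the open and the closed subsets of $X$. Since the Scott-open subsets of $C_{\kappa}(X,\$^{\ast})$ are exactly the compact families of open sets, and since $\mathcal{F}$ is open for the upper Kuratowski topology precisely when $\mathcal{F}_{c}=\{X\setminus F:F\in\mathcal{F}\}$ is compact, the map $c$ carries Scott-open sets bijectively onto upper-Kuratowski-open sets; hence $c:C_{\kappa}(X,\$^{\ast})\to C_{\kappa}(X,\$)$ is a homeomorphism. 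Because $c(U\cap V)=c(U)\cup c(V)$, the pair $c$ and $c\times c$ conjugates $\cap$ into $\cup$, so $\cap$ is jointly continuous if and only if $\cup$ is. This settles $(2)\Leftrightarrow(3)$ with no further computation.

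For $(2)\Rightarrow(1)$, let $\mathcal{A}$ be a nonempty compact family; since $\mathcal{A}$ is openly isotone it contains $X$, so $\cap(X,X)=X\in\mathcal{A}$ and $\mathcal{A}$ is a Scott-neighborhood of $X$. Joint continuity of $\cap$ at $(X,X)$ then furnishes compact families $\mathcal{B}\ni X$ and $\mathcal{C}\ni X$ with $\mathcal{B}\vee\mathcal{C}\subseteq\mathcal{A}$. Setting $\mathcal{D}:=\mathcal{B}\cap\mathcal{C}$, which is compact because a finite intersection of compact families is compact, I obtain $\mathcal{D}\vee\mathcal{D}\subseteq\mathcal{B}\vee\mathcal{C}\subseteq\mathcal{A}$, so $X$ is infraconsonant. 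Note that only continuity at the single point $(X,X)$ is used here.

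For $(1)\Rightarrow(2)$ I would verify joint continuity of $\cap$ at an arbitrary pair $(U_{0},V_{0})$. Let $\mathcal{A}$ be a compact family containing $U_{0}\cap V_{0}$. Bare infraconsonance is not enough at this stage: it produces a compact $\mathcal{B}$ with $\mathcal{B}\vee\mathcal{B}\subseteq\mathcal{A}$, but with no control over whether $\mathcal{B}$ contains the prescribed points $U_{0}$ and $V_{0}$. This gap is exactly what Lemma \ref{lem:everywhere} closes: applied to $\mathcal{A}$ and the element $U_{0}\cap V_{0}\in\mathcal{A}$, it yields a compact family $\mathcal{D}$ with $U_{0}\cap V_{0}\in\mathcal{D}$ and $\mathcal{D}\vee\mathcal{D}\subseteq\mathcal{A}$. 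Since $\mathcal{D}$ is openly isotone and $U_{0},V_{0}\supseteq U_{0}\cap V_{0}$, both $U_{0}$ and $V_{0}$ lie in $\mathcal{D}$; taking $\mathcal{B}=\mathcal{C}=\mathcal{D}$ provides Scott-neighborhoods of $U_{0}$ and $V_{0}$ with $\mathcal{B}\vee\mathcal{C}\subseteq\mathcal{A}$, which is the required joint continuity.

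The main obstacle is precisely this passage from the global formulation of infraconsonance to its pointed form, and it is where the regularity of $X$ is consumed, through Lemma \ref{lem:everywhere}. Once that lemma is in hand, every implication reduces to the single bookkeeping identity that $\mathcal{B}\vee\mathcal{C}=\{U\cap V:U\in\mathcal{B},V\in\mathcal{C}\}$ is exactly the image of $\mathcal{B}\times\mathcal{C}$ under $\cap$, together with the fact that finite intersections of compact families are compact. I would therefore present the argument in the order $(2)\Leftrightarrow(3)$, then $(2)\Rightarrow(1)$, then $(1)\Rightarrow(2)$, so that the one genuinely technical input is isolated in the final step.
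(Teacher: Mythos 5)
Your proposal is correct and follows essentially the same route as the paper: $(2)\Leftrightarrow(3)$ by the complementation homeomorphism, $(1)\Rightarrow(2)$ via Lemma \ref{lem:everywhere} to upgrade infraconsonance to its pointed form at $U_{0}\cap V_{0}$ (which is indeed where regularity enters), and $(2)\Rightarrow(1)$ by intersecting the two compact families obtained from continuity (the paper tests continuity on $\cap^{-1}(\mathcal{A})$ rather than at the specific point $(X,X)$, but this is the same argument). Your write-up is if anything slightly more explicit than the paper's at the step $\mathcal{D}\vee\mathcal{D}\subseteq\mathcal{B}\vee\mathcal{C}\subseteq\mathcal{A}$.
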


\begin{proof}
The equivalence between (2) and (3) is immediate, because these topologies
are isomorphic by complementation. Assume $X$ is infraconsonant and let $U$
and $V$ be two open subsets of $X.$ Let $\mathcal{A}$ be a Scott open
neighborhood of $\cap (U,V)$, i.e., a compact family containing $U\cap V$.
By Lemma \ref{lem:everywhere}, there is a compact family $\mathcal{C}$
containing $U\cap V$ such that 
\begin{equation*}
\cap (\mathcal{C},\mathcal{C})=\mathcal{C\vee C\subseteq A}.
\end{equation*}%
Note that $\mathcal{C}$ is a common Scott neighborhood of $U$ and $V$ so
that $\cap $ is continuous.

Conversely, assume that $\cap $ is continuous and consider a compact family $%
\mathcal{A}.$ Since $\cap ^{-1}(\mathcal{A})$ has non-empty interior there
are compact families $\mathcal{B}$ and $\mathcal{C}$ such that $\mathcal{%
B\times C}\subseteq \cap ^{-1}(\mathcal{A})$. The compact family $\mathcal{%
D=B\cap C}$ then satisfies $\mathcal{D\vee D\subseteq A}$ so that $X$ is
infraconsonant.
\end{proof}

Note that the implication $(2)\Longrightarrow (1)$ does not use regularity.
It is well known (e.g. \cite{schwarz84}) that the natural convergence on $%
C(X,{\$})$ is topological if and only if $X$ is core-compact. A topological
space $X$ is \emph{core-compact} if for every $x\in X$ and every $U\in 
\mathcal{O}(x)$ there is $V\in \mathcal{O}(x)$ that is relatively compact in 
$U$.

Therefore, if $X$ is core-compact then the Isbell topology and the natural
convergence coincide on $C(X,\$)$, which is easily seen to make the map $%
\cap $ jointly continuous. In view of Proposition \ref{prop:join}, $X$ is
then infraconsonant. Moreover $X$ is locally compact if and only if the
natural convergence coincides with the compact-open topology on $C(X,\$)$
(e.g. \cite[Proposition 2.19]{schwarz84}), that is, every compact family is
compactly generated. Therefore, if $X$ is core-compact but not locally
compact, then $X $ is infraconsonant but not consonant. Such a space is
constructed in \cite[Section 7]{hofflaw}.

\begin{corollary}
There exists a (non Hausdorff) infraconsonant space that is not consonant.
\end{corollary}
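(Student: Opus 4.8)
The plan is to read the result straight off the characterizations assembled in the paragraph preceding the statement, so that the only genuinely new ingredient is the existence of a suitable example space. Concretely, I would reduce the corollary to producing a space $X$ that is \emph{core-compact but not locally compact}, and then import such a space from the literature.

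First I would fix the reduction. If $X$ is core-compact, then the natural convergence on $C(X,\$)$ is topological and coincides with the Isbell topology; since the natural convergence makes the binary map $\cap$ (equivalently, by complementation, $\cup$) jointly continuous, so does the Isbell topology, and then Proposition \ref{prop:join} — through the implication $(2)\Rightarrow(1)$, which as remarked just after its proof does not require regularity — yields that $X$ is infraconsonant. On the other hand, $X$ is consonant exactly when every compact family is compactly generated, which by the standard $C(X,\$)$-characterization amounts to the coincidence of the natural convergence with the compact-open topology on $C(X,\$)$, i.e. to local compactness of $X$. Hence any core-compact, non-locally-compact $X$ is automatically infraconsonant and non-consonant, and the corollary is established once such an $X$ is exhibited.

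Second, I would supply the example. The Hofmann–Lawson construction in \cite[Section 7]{hofflaw} furnishes a space that is core-compact but not locally compact, which is precisely what the reduction demands. I would also point out that such a space must be non-Hausdorff, since for Hausdorff (indeed for sober) spaces core-compactness and local compactness coincide; this is what forces, and explains, the parenthetical qualification in the statement.

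The main obstacle is not in the logic, which is a direct assembly of Proposition \ref{prop:join} with the $C(X,\$)$-characterizations of core-compactness and local compactness, but rather in having in hand a space that separates these two notions. That separation is genuinely nontrivial and is imported wholesale from \cite{hofflaw}; the only place where substantive work is needed — and it is work external to this paper — is the verification that the Hofmann–Lawson space is core-compact yet fails to be locally compact.
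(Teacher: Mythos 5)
Your proposal is correct and follows essentially the same route as the paper: core-compactness makes the natural convergence on $C(X,\$)$ topological and equal to the Isbell topology, whence $\cap$ is jointly continuous and Proposition \ref{prop:join} (the implication $(2)\Rightarrow(1)$, which needs no regularity) gives infraconsonance, while failure of local compactness rules out consonance, and the example is imported from \cite[Section 7]{hofflaw} exactly as in the paper. Your added observation that sobriety (hence Hausdorffness) would collapse core-compactness into local compactness is a nice explicit justification of the parenthetical ``non Hausdorff,'' which the paper leaves implicit.
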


We will now exhibit a class of prime (hence completely regular)
non-infraconsonant spaces. Recall that if $(\mathcal{F}_{n})_{n<\omega }$ is
a sequence of filters, then the \emph{contour }$\mathcal{F}$ is defined by $%
\mathcal{F}:=\bigcup_{p<\omega }\bigcap_{n\geq p}\mathcal{F}_{n}$ \cite%
{cascades}. A prime space $X$ (with only non-isolated point $\infty $) is a 
\emph{contour space }if there exists a family $\{X_{n}:n\in \omega \}$ of
disjoint infinite subsets of $X\setminus \{\infty \}$ such that $%
X=\bigcup_{n<\omega }X_{n}\cup \left\{ \infty \right\} $ and 
\begin{equation*}
\mathcal{N}(\infty )=\{\infty \}\wedge \bigcup_{p<\omega }\bigcap_{n\geq p}%
\mathcal{F}_{n},
\end{equation*}%
where each $\mathcal{F}_{n}$ is a free filter on $X_{n}$. Notice that the
sets $\{\infty \}\cup \bigcup_{n\geq p}F_{n}$, where $F_{n}\in \mathcal{F}%
_{n}$ and $p<\omega $ form a filter base of $\mathcal{N}(\infty )$. Therefore%
\begin{gather}
\forall _{n<\omega }\;X_{n}\notin \mathcal{N}(\infty )^{\#},  \label{1} \\
\forall _{V\in \mathcal{N}(\infty )}\;|\{n\in \omega :X_{n}\cap
V=\varnothing \}|<\omega .  \label{2}
\end{gather}

Compact sets in a contour space are finite. In fact, if $K$ is compact, then 
$K_{n}:=K\cap X_{n}$ is finite, because $\mathcal{F}_{n}$ is finer than the
cofinite filter of $X_{n}$ for each $n<\omega $; as $\mathcal{N}(\infty
)\geq \bigcap_{n\in \omega }\mathcal{F}_{n}$ the set $\bigcup_{n<\omega
}K_{n}$ is closed and does not contain $\infty $, so that it consists of
isolated points, and thus is finite.

In particular, $\infty $ is a \emph{compact-repellent} point, that is, $%
\infty \notin \mathrm{cl}\left( K\setminus \left\{ \infty \right\} \right) $
for each compact set. On the other hand, $X_{n}$ is closed for each $n$ and
the upper and the lower limit of $\left( X_{n}\right) _{n<\omega }$ coincide
and are equal to $\left\{ \infty \right\} $;\footnote{%
that is, $\left\{ \infty \right\} =\bigcap_{p<\omega }\mathrm{cl}\left(
\bigcup_{n\geq p}X_{n}\right) =\bigcap_{N\in \lbrack \omega ]^{\omega }}%
\mathrm{cl}\left( \bigcup_{n\in N}X_{n}\right) $} Thus, by \cite[Theorem 6.1]%
{DGL.kur}, contour spaces are not consonant. Actually,

\begin{theorem}
\label{th:comtour} Contour prime spaces are not infraconsonant.
\end{theorem}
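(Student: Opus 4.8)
The plan is to show that a contour prime space $X$ fails the definition of infraconsonance by exhibiting a single compact family $\mathcal{A}$ that admits no compact subfamily $\mathcal{B}$ with $\mathcal{B}\vee\mathcal{B}\subseteq\mathcal{A}$. The natural candidate for $\mathcal{A}$ is built directly from the neighborhood filter of the non-isolated point: take $\mathcal{A}:=\mathcal{O}_X(\mathcal{N}(\infty))$, the openly-isotone family of open sets containing a neighborhood of $\infty$. This is compact because $\mathcal{N}(\infty)$ is a neighborhood filter (so $\mathcal{O}_X(\mathcal{N}(\infty))$ is a compact family — indeed every neighborhood filter gives a compact family, since if an open set $\bigcup\mathcal{P}$ contains a neighborhood $V$ of $\infty$, then $\infty$ lies in some member of $\mathcal{P}$, and that single member already belongs to $\mathcal{A}$). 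The idea is then that the ``$\vee\vee$'' closure of any compact $\mathcal{B}\subseteq\mathcal{A}$ must escape $\mathcal{A}$, because pairwise intersections of neighborhoods of $\infty$ can fail to remain neighborhoods of $\infty$ in a controlled way dictated by the contour structure.

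First I would record the structural facts already assembled in the excerpt: compact subsets of $X$ are finite, $\infty$ is compact-repellent, and the two properties (\ref{1}) and (\ref{2}). Next I would analyze what a compact subfamily $\mathcal{B}\subseteq\mathcal{A}$ looks like. Since $\mathcal{B}$ is compact and each member contains (a member of) $\mathcal{N}(\infty)$, each $B\in\mathcal{B}$ omits only finitely many of the $X_n$ entirely; more precisely, using (\ref{2}), for $B\in\mathcal{B}$ the set $\{n:X_n\cap B=\varnothing\}$ is finite. The key move is to use the compactness of $\mathcal{B}$ against the contour decomposition $X=\{\infty\}\cup\bigcup_n X_n$: I would write a suitable open set as a union indexed by $\omega$ (e.g. an increasing union of sets that eventually swallow each $X_n$ on cofinitely many coordinates) lying in $\mathcal{B}$, extract a finite subunion in $\mathcal{B}$ by compactness, and observe that this finite subunion must already omit infinitely many $X_n$ — contradicting membership in $\mathcal{A}$, since a member of $\mathcal{A}$ must meet cofinitely many $X_n$ by (\ref{2}).

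The heart of the argument, and the step I expect to be the main obstacle, is extracting from ``$\mathcal{B}\vee\mathcal{B}\subseteq\mathcal{A}$'' a genuine contradiction rather than just a size estimate. The plan is to pick, for each $n$, a member $B_n\in\mathcal{B}$ whose ``$n$-th coordinate'' $B_n\cap X_n$ is made thin using the freeness of $\mathcal{F}_n$ — freeness lets me shrink $B_n\cap X_n$ to avoid any prescribed finite set while staying in $\mathcal{F}_n$, hence keeping $B_n$ a neighborhood of $\infty$ and so in $\mathcal{A}$ and (with care) in $\mathcal{B}$. Then I intersect two such members $B\cap C$ across different coordinate-regimes so that $B\cap C$ meets only finitely many $X_n$, forcing $B\cap C\notin\mathcal{A}$ by (\ref{2}) while $B,C\in\mathcal{B}$; this directly violates $\mathcal{B}\vee\mathcal{B}\subseteq\mathcal{A}$. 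The delicate point is arranging the two chosen sets to lie in the \emph{given} compact $\mathcal{B}$ rather than in some ad-hoc family: I would exploit that $\mathcal{B}\subseteq\mathcal{A}=\mathcal{O}_X(\mathcal{N}(\infty))$ together with the contour filter base $\{\infty\}\cup\bigcup_{n\ge p}F_n$ to produce two such neighborhoods whose intersection drops below the threshold imposed by (\ref{1}) and (\ref{2}). Once the contradiction is reached for the single family $\mathcal{A}=\mathcal{O}_X(\mathcal{N}(\infty))$, infraconsonance fails by definition and the theorem follows.
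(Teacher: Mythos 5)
There is a genuine gap, and it is fatal to the strategy rather than a repairable detail: your witness family $\mathcal{A}=\mathcal{O}_X(\mathcal{N}(\infty))$ is exactly the family of all open sets containing $\infty$, and this family is closed under finite intersection: the intersection of two open neighborhoods of $\infty$ is again an open neighborhood of $\infty$. Hence $\mathcal{A}\vee\mathcal{A}\subseteq\mathcal{A}$, and since you have correctly checked that $\mathcal{A}$ is compact, taking $\mathcal{B}=\mathcal{A}$ shows that this particular $\mathcal{A}$ \emph{does} satisfy the infraconsonance condition. No amount of care in choosing $B,C\in\mathcal{B}$ can produce $B\cap C\notin\mathcal{A}$, because $B\cap C$ always contains $\infty$ and is open; in particular your claim that $B\cap C$ can be made to meet only finitely many $X_n$ contradicts property (\ref{2}), which guarantees that every neighborhood of $\infty$ meets cofinitely many of the $X_n$. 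The same confusion undermines the earlier step: in any open cover of a member of $\mathcal{A}$, the single cover element containing $\infty$ already lies in $\mathcal{A}$, so no finite subunion ``omits infinitely many $X_n$''.

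The paper's proof works precisely because its witness family is compact but \emph{not} a filter: it takes $\mathcal{D}:=\{D\in\mathcal{O}(\infty):\forall n\in\omega,\ D\cap X_n\neq\varnothing\}$, the open neighborhoods of $\infty$ that meet \emph{every} $X_n$. By (\ref{1}) there are neighborhoods of $\infty$ disjoint from any given $X_n$, so members of $\mathcal{D}$ must carry extra points in each $X_n$ beyond a basic neighborhood, and two members of $\mathcal{D}$ can intersect in a set missing some $X_n$ entirely. The paper then shows that any compact family $\mathcal{C}\subseteq\mathcal{O}(\infty)$ must be free on some $X_{n_0}$ (its trace there is finer than the cofinite filter of $X_{n_0}$), extracts by compactness a $C_0\in\mathcal{C}$ with $C_0\cap X_{n_0}$ finite, and uses freeness to find $C\in\mathcal{C}$ with $C\cap C_0\cap X_{n_0}=\varnothing$, whence $C\cap C_0\notin\mathcal{D}$ and $\mathcal{C}\vee\mathcal{C}\nsubseteq\mathcal{D}$. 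To salvage your write-up you must replace your $\mathcal{A}$ by a family of this kind; the auxiliary observations you list (finiteness of compact sets, properties (\ref{1}) and (\ref{2}), freeness of the $\mathcal{F}_n$) are the right ingredients, but they can only bite once the target family fails to be intersection-stable.
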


\begin{proof}
The family%
\begin{equation*}
\mathcal{D}:=\{D\mathcal{\in O}(\infty ),\forall _{n\in \omega }:D\cap
X_{n}\neq \varnothing \}
\end{equation*}%
is non-empty and compact. Indeed, if $\{O_{\alpha }:\alpha \in I\}$ is an
open cover of $D\in \mathcal{D},$ there is $\alpha _{0}\in I$ such that $%
\infty \in O_{\alpha _{0}}$ and by (\ref{2}), the set $J:=\{n\in \omega
:X_{n}\cap O_{\alpha _{0}}=\varnothing \}$ is finite. For each $n\in J,$
there is $x_{n}\in D\cap X_{n}$, and there is $\alpha _{n}\in I$ such that $%
x_{n}\in O_{\alpha _{n}}.$ Then $\bigcup_{n\in J\cup \{0\}}O_{\alpha
_{n}}\in \mathcal{D}$.

On the other hand, if $\mathcal{C}$\ $\subseteq \mathcal{O}(\infty )$\ is a
compact family, there is $n_{0}$\ such that $\mathcal{C}$ is free on $%
X_{n_{0}}$, that is, the restriction of $\mathcal{C}$ to $X_{n_{0}}$ is
finer than the cofinite filter of $X_{n_{0}}$. Otherwise, for each $n\in
\omega $ there would be a finite subset $F(n)$ of $X_{n}$ such that $F(n)\#%
\mathcal{C}.$

On the other hand, $V:=X\setminus \bigcup_{n\in \omega }F(n)\in \mathcal{O}%
(\infty )$, hence, by the compactness of $\mathcal{C}$, there exists a
finite set $T$ such that $V\cup T\in \mathcal{C}$ and $\left( V\cup T\right)
\cap \bigcup_{n\in \omega }F(n)\subset T$. This is a contradiction with $%
F(n)\#\mathcal{C}$ for all $n.$

By the compactness of $\mathcal{C}$ there exists $C_{0}\in \mathcal{C}$ such
that $C_{0}\cap X_{n_{0}}$ is finite. As $\mathcal{C}$ is free on $X_{n_{0}}$%
, there is $C\in \mathcal{C}$ disjoint from $C_{0}\cap X_{n_{0}}$, so that $%
\mathcal{C\vee C}\nsubseteq \mathcal{D}$. Hence $X$ is not infraconsonant.
\end{proof}


\begin{example}
\label{ex:Arens}\emph{[The Arens space is not infraconsonant]} Recall that
the Arens space has underlying set $\{\infty \}\cup \{x_{n,k}:n\in \omega
,k\in \omega \}$ and carries the topology in which every point but $\infty $
is isolated and a base of neighborhoods of $\infty $ is given by sets of the
form%
\begin{equation*}
\{\infty \}\cup \bigcup_{n\geq p}\{x_{n,k}:k\geq f(n)\},
\end{equation*}%
where $p$ ranges over $\omega $ and $f$ ranges over $\omega ^{\omega }.$
\end{example}

In \cite{DSW} a notion of \emph{sequential contour} of arbitrary order was
introduced. A sequential contour of rank $1$ is a free sequential filter
(that is, the cofinite filter of a countable set); a sequential contour of
rank $\alpha >1$ is a contour of the sequence $(\mathcal{F}_{n})_{n<\omega }$%
, where $\mathcal{F}_{n}$ is a sequential contour of rank $\alpha _{n}$ on a
countable set $X_{n}$, $\left\{ X_{n}:n<\omega \right\} $ are disjoint, and $%
\alpha =\sup_{n<\omega }(\alpha _{n}+1)$. It follows that

\begin{corollary}
For every countable ordinal $\alpha $ the prime topology determined by a
sequential contour of rank $\alpha $ is not infraconsonant.
\end{corollary}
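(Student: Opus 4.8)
The plan is to deduce the corollary directly from \thmref{th:comtour}, by recognizing that the prime space determined by a sequential contour of rank $\alpha$ is already a contour prime space in the sense introduced just before that theorem. The point is purely structural: for $\alpha\geq 2$ the very definition of a sequential contour presents the generating filter as a contour $\bigcup_{p<\omega}\bigcap_{n\geq p}\mathcal{F}_{n}$ of a sequence $(\mathcal{F}_{n})_{n<\omega}$, where each $\mathcal{F}_{n}$ is a sequential contour of rank $\alpha_{n}<\alpha$ carried by a countable set $X_{n}$, and the $X_{n}$ are pairwise disjoint. Consequently the prime topology it determines has, at its unique non-isolated point, the neighborhood filter $\mathcal{N}(\infty)=\{\infty\}\wedge\bigcup_{p<\omega}\bigcap_{n\geq p}\mathcal{F}_{n}$, which is exactly the defining form of a contour prime space.

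To apply \thmref{th:comtour} I would then check the two remaining clauses of that definition, namely that each $X_{n}$ is infinite and that each $\mathcal{F}_{n}$ is a \emph{free} filter on $X_{n}$. Infiniteness is automatic, since a set carrying a free filter cannot be finite. Freeness I would obtain by transfinite induction on the rank: a rank-$1$ contour is the cofinite filter of a countable set, free by definition; and for $\beta\geq 2$, assuming each constituent $\mathcal{G}_{m}$ (of rank $<\beta$, on pairwise disjoint sets $Y_{m}$) is free, if some point $y$ lay in every member of the contour $\bigcup_{p}\bigcap_{m\geq p}\mathcal{G}_{m}$ one could pick $m_{0}$ with $y\in Y_{m_{0}}$ and any $p>m_{0}$, and since $y\notin Y_{m}$ for $m\geq p$ while each $\mathcal{G}_{m}$ is free, select members of the $\mathcal{G}_{m}$ whose union avoids $y$, a contradiction. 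This closes the induction, and the hypotheses of \thmref{th:comtour} are met, yielding non-infraconsonance immediately.

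The body of the argument for $\alpha\geq 2$ is thus routine bookkeeping against the recursive definition, the only genuinely provable ingredient being the freeness induction above. The step I would treat with the most care is instead the rank-$1$ boundary case: there the generating filter is the cofinite filter of a countable set, and the resulting prime space is a convergent sequence, which is not a contour in the sense of \thmref{th:comtour}. I would therefore examine this degenerate instance separately rather than through the contour reduction; the substantive content of the corollary, and the regime in which the contour machinery actually operates, is $\alpha\geq 2$.
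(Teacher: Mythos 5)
Your argument is exactly the one the paper intends: the paper offers no proof beyond ``It follows that,'' the point being precisely that for $\alpha>1$ the recursive definition of a sequential contour exhibits the neighborhood filter of $\infty$ as $\{\infty\}\wedge\bigcup_{p<\omega}\bigcap_{n\geq p}\mathcal{F}_{n}$ with the $\mathcal{F}_{n}$ free filters on disjoint infinite sets $X_{n}$, so that Theorem~\ref{th:comtour} applies directly, and your freeness induction supplies the only detail actually worth checking. Your caution about the rank-$1$ boundary case is well placed and in fact sharper than the paper itself: the prime space determined by a rank-$1$ sequential contour is a convergent sequence, which is compact metrizable, hence consonant, hence infraconsonant, so the corollary as literally stated fails for $\alpha=1$ and should be read for $\alpha>1$.
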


We do not know however if there are completely regular infraconsonant spaces
that are not consonant.

A. Bouziad pointed out to us that, in view of Proposition \ref{prop:join},
Theorem \ref{th:comtour} also shows that the assumption of separation is
essential in the result of J. Lawson stating that a compact Hausdorff
semitopological lattice is topological \cite{lawson.semilattice}. Indeed, if 
$X$ is regular but not infraconsonant (for instance the Arens space), then $%
C(X,\$^{\ast })$ is a $T_{0}$ compact semitopological lattice (i.e., $\cap $
is separately continuous) which is not a topological lattice, because $\cap $
is not jointly continuous.

\section{Continuity of translations}

As we mentioned, Francis Jordan introduced in \cite{francis.coincide} the 
\emph{fine Isbell topology }on the set $C(X,Y)$. We shall now prove that
translations are always continuous for the fine Isbell topology, and that
the neighborhood filters at the zero function $\overline{0}$ for the Isbell
and for the fine Isbell topologies coincide. Therefore translations are
continuous for the Isbell topology if and only if it coincides with the fine
Isbell topology.

If $N$ and $M$ are two subsets of $X\times Y$, the set $N$ is \emph{buried
in }$M,$ in symbols $N\ll M,$ if for every $x\in X$ there exists $V\in 
\mathcal{O}_{X}(x)$ and $W\in \mathcal{O}_{Y}(N(x))$ such that $V\times
W\subseteq M$. If $f\in C(X,Y)$ and $A\subseteq X,$ we denote by $f_{|A}$
the graph of the restriction of $f$ to $A$. A subbase for the fine Isbell
topologies is given by sets of the form:%
\begin{equation*}
\left\langle \mathcal{A},M\right\rangle :=\left\{ f\in C(X,Y):\exists A\in 
\mathcal{A},f_{|A}\ll M\right\} ,
\end{equation*}%
where $\mathcal{A}$ ranges over compact families of $X$ and $M$ ranges over
open subsets of $X\times Y$. We denote by $C_{\overline{\kappa }}(X,Y)$ the
set $C(X,Y)$ endowed with the fine Isbell topology. If $(G,+)$ is a
topological group, we denote by $0$ its neutral element and by $\overline{0}$
the constant function zero of $C(X,G).$

\begin{theorem}
\label{th:coincideat0} Let $(G,+)$ be a topological group. The fine Isbell
topological space $C_{\overline{\kappa }}(X,G)$ is invariant by translations.

The neighborhood filters at $\overline{0}$ for the fine Isbell and the
Isbell topologies coincide.
\end{theorem}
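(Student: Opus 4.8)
The plan is to prove the two assertions of \thmref{th:coincideat0} separately, using the definition of the fine Isbell topology via the ``buried in'' relation $\ll$. The key observation is that $\ll$ behaves well under translation by a fixed continuous function: if $f_{|A}\ll M$ for some open $M\subseteq X\times G$, then translating the graph should convert this into a ``buried'' relation with respect to a translated open set.

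For the \textbf{invariance by translations}, I would fix $g\in C(X,G)$ and show that the translation map $\tau_g\colon h\mapsto h+g$ is a homeomorphism of $C_{\overline{\kappa}}(X,G)$; since $\tau_g^{-1}=\tau_{-g}$, it suffices to show $\tau_g$ is continuous, i.e. that the preimage of each subbasic open set $\left\langle \mathcal{A},M\right\rangle$ is open. The natural candidate is to define, for fixed $g$, a ``sheared'' open set $M_g:=\{(x,z-g(x)):(x,z)\in M\}$, which is open in $X\times G$ because the map $(x,z)\mapsto(x,z-g(x))$ is a homeomorphism of $X\times G$ (using that $g$ is continuous and $G$ is a topological group, so subtraction is continuous). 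The heart of the argument is then the equivalence
\begin{equation*}
(h+g)_{|A}\ll M \iff h_{|A}\ll M_g,
\end{equation*}
from which $\tau_g^{-1}\left\langle \mathcal{A},M\right\rangle =\left\langle \mathcal{A},M_g\right\rangle$ follows immediately, giving continuity. To verify this equivalence I would unwind the definition of $\ll$ pointwise: for $x\in A$, the condition $(h+g)_{|A}\ll M$ at $x$ asks for $V\in\mathcal{O}_X(x)$ and $W\in\mathcal{O}_G(h(x)+g(x))$ with $V\times W\subseteq M$, and I would translate $W$ to $W-g(x)$ and shrink $V$ using continuity of $g$ so that the shear carries the box into $M_g$; the symmetric direction is identical with $g$ replaced by $-g$.

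For the \textbf{coincidence of neighborhood filters at $\overline{0}$}, I would show that each subbasic fine-Isbell neighborhood of $\overline{0}$ contains an ordinary Isbell neighborhood of $\overline{0}$, and conversely. Since the Isbell topology is coarser than the fine Isbell topology (as noted in the introduction), one inclusion of filters is automatic, so the real work is to show that every fine Isbell neighborhood $\left\langle \mathcal{A},M\right\rangle\ni\overline{0}$ contains a set of the basic Isbell form $[\mathcal{A}',(-p,p)]$ from \eqref{eq:kat0}. The condition $\overline{0}\in\left\langle \mathcal{A},M\right\rangle$ means there is $A\in\mathcal{A}$ with $\overline{0}_{|A}\ll M$, i.e. for every $x\in A$ there is a basic box $V_x\times W_x\subseteq M$ with $x\in V_x$ and $0\in W_x$. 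The plan is to use this open cover $\{V_x:x\in A\}$ of $A$ together with compactness of $\mathcal{A}$: since $A=\bigcup_{x\in A}(V_x\cap A)\subseteq\bigcup_{x\in A}V_x$ and $A\in\mathcal{A}$, passing to the open cover $\{V_x\}_{x\in A}\cup\{(X\setminus A)^{\circ}\text{-type filler}\}$ of a set in $\mathcal{A}$ and invoking compactness of $\mathcal{A}$ yields finitely many $x_1,\dots,x_n$ with $\bigcup_{i}V_{x_i}\in\mathcal{A}$.

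The main obstacle I anticipate is this last compactness extraction, because the open cover $\{V_x:x\in A\}$ covers only $A$, not an open set containing an element of $\mathcal{A}$, so I cannot apply compactness of $\mathcal{A}$ to it directly. I would circumvent this by working with the compact family $\mathcal{A}\downarrow A$ (\lemref{lem:restriction}) or, more robustly, by choosing a common small $W$: since $G$ is a topological group, I may shrink each $W_{x_i}$ to a fixed symmetric $W\in\mathcal{N}_G(0)$ and correspondingly shrink each $V_{x_i}$ so that $V_{x_i}\times W\subseteq M$ still holds, and then set $\mathcal{A}':=\mathcal{A}\downarrow\left(\bigcup_i V_{x_i}\right)$ (which lies in $\mathcal{A}$ and is compact). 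Then any $h\in[\mathcal{A}',W]$ satisfies $h(A')\subseteq W$ for some $A'\subseteq\bigcup_i V_{x_i}$, whence $h_{|A'}\ll M$ because over each $V_{x_i}$ the value lies in $W$ and the box $V_{x_i}\times W$ sits inside $M$; this shows $[\mathcal{A}',W]\subseteq\left\langle\mathcal{A},M\right\rangle$, and specializing $G=\mathbb{R}$ and $W=(-p,p)$ matches \eqref{eq:kat0}, completing the reverse inclusion and hence the coincidence of the two neighborhood filters at $\overline{0}$.
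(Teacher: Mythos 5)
Your proposal is correct, but the two halves relate differently to the paper's argument. For the coincidence of the neighborhood filters at $\overline{0}$ you reproduce the paper's proof almost verbatim: extract the boxes $V_{x}\times W_{x}\subseteq M$ from $\overline{0}_{|A}\ll M$, use compactness of $\mathcal{A}$ to pass to finitely many $x_{1},\dots ,x_{n}$ with $B=\bigcup_{i}V_{x_{i}}\in \mathcal{A}$, and conclude $[\mathcal{A}\downarrow B,W]\subseteq \left\langle \mathcal{A},M\right\rangle $ with $W=\bigcap_{i}W_{x_{i}}$. The obstacle you anticipate there is not real: $A\in \mathcal{A}$ is itself open, so $\bigcup_{x\in A}V_{x}$ is an open superset of a member of $\mathcal{A}$ and hence lies in $\mathcal{A}=\mathcal{O}_{X}(\mathcal{A})$; no ``filler'' set is needed (and adjoining one would actually be unhelpful, since the finite subfamily might contain it). For translation invariance, however, your route is genuinely different from the paper's. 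The paper never shears $M$; instead it proves the two inclusions $\mathcal{N}_{\overline{\kappa }}(f)\geq f+\mathcal{N}_{\kappa }(\overline{0})$ (using the tube $M=\bigcup_{x}\{x\}\times (f(x)+B)$) and $\mathcal{N}_{\overline{\kappa }}(f)\leq f+\mathcal{N}_{\kappa }(\overline{0})$ (a second compactness extraction with the three-epsilon bookkeeping $B_{x}^{\prime }+B_{x}^{\prime }\subseteq B_{x}$), and then reads off invariance from $\mathcal{N}_{\overline{\kappa }}(f)=f+\mathcal{N}_{\kappa }(\overline{0})=f+\mathcal{N}_{\overline{\kappa }}(\overline{0})$. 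Your argument via the homeomorphism $(x,z)\mapsto (x,z-g(x))$ and the identity $\tau _{g}^{-1}\left\langle \mathcal{A},M\right\rangle =\left\langle \mathcal{A},M_{g}\right\rangle $ is shorter and cleaner; the one point that genuinely needs the care you indicate is that the shear does not carry boxes to boxes, so the equivalence $(h+g)_{|A}\ll M\iff h_{|A}\ll M_{g}$ requires shrinking $V$ by continuity of $g$ and splitting the neighborhood of $h(x)+g(x)$ using continuity of addition (and, as in the paper, commutativity of $G$ is implicitly used in rearranging $h(x)+u+g(x)+v$). Your two statements combined still yield the paper's byproduct $\mathcal{N}_{\overline{\kappa }}(f)=f+\left\{ [\mathcal{A},B]:\mathcal{A}\in \kappa (X),B\in \mathcal{O}_{G}(0)\right\} $, since invariance gives $\mathcal{N}_{\overline{\kappa }}(f)=f+\mathcal{N}_{\overline{\kappa }}(\overline{0})$ and the second assertion identifies $\mathcal{N}_{\overline{\kappa }}(\overline{0})$ with $\mathcal{N}_{\kappa }(\overline{0})$, so nothing is lost by taking your route.
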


\begin{proof}
(1) $\mathcal{N}_{\kappa }(\overline{0})\leq \mathcal{N}_{\overline{\kappa }%
}(\overline{0}):$ is clear. Consider now $\left\langle \mathcal{A}%
,M\right\rangle $ such that $\overline{0}\in \left\langle \mathcal{A}%
,M\right\rangle ,$ $\mathcal{A}\in \kappa (X)$ and $M$ is open in $X\times G$%
. There is $A\in \mathcal{A}$ such that for every $x\in A,$ there is $%
V_{x}\in \mathcal{O}(x)$ and $W_{x}\in \mathcal{O}_{G}(0)$ such that $%
V_{x}\times W_{x}\subseteq M.$ Since $\mathcal{A}$ is compact and $%
A=\bigcup\limits_{x\in A}V_{x}$ there is a finite subset $F$ of $A$ such
that $B=\bigcup\limits_{x\in F}V_{x}\in \mathcal{A}.$ But then $%
W=\bigcap\limits_{x\in F}W_{x}\in \mathcal{O}_{G}(0)$ and $B\times
W\subseteq M$ so that 
\begin{equation*}
\overline{0}\in \left[ \mathcal{A}\downarrow B,W\right] \subseteq
\left\langle \mathcal{A},M\right\rangle .
\end{equation*}

(2) $\mathcal{N}_{\overline{\kappa }}(f)\geq f+\mathcal{N}_{\kappa }(%
\overline{0}):$ Let $\mathcal{A}\in \kappa (X)$, $B\in \mathcal{O}_{G}(0)$.
Consider $M:=\bigcup\limits_{x\in X}\{x\}\times \left( f(x)+B\right) $. Then 
$f\in \left\langle \mathcal{A},M\right\rangle $ and $\left\langle \mathcal{A}%
,M\right\rangle \subseteq f+[\mathcal{A},B]$. Indeed, if $h\in \left\langle 
\mathcal{A},M\right\rangle $ then there is $A\in \mathcal{A}$ such that for
all $x\in A$, there is an open neighborhood $V_{x}$ of $x$ and an open
neighborhood $W_{x}$ of $h(x)$ such that $V_{x}\times W_{x}\subseteq M$. In
particular, $\{x\}\times W_{x}\subseteq M$ so that $W_{x}\subseteq f(x)+B$
and $\left( h-f\right) (x)\in B$. Therefore $\left( h-f\right) \left(
A\right) \subseteq B$.

(3) $\mathcal{N}_{\overline{\kappa }}(f)\leq f+\mathcal{N}_{\kappa }(%
\overline{0}):$ Let $\mathcal{A}\in \kappa (X)$ and let $M$ be an open
subset of $X\times G$ such that $f\in \left\langle \mathcal{A}%
,M\right\rangle ,$ that is, there is $A\in \mathcal{A}$ such that for all $%
x\in A$, there is an open neighborhood $V_{x}$ of $x$ and an open
neighborhood $W_{x}=f(x)+B_{x}$ of $f(x),$ where $B_{x}\in \mathcal{O}%
_{G}(0) $ such that $V_{x}\times W_{x}\subseteq M$. By continuity of $f$ we
may assume that $f(V_{x})\subseteq f(x)+B_{x}^{\prime }\subseteq W_{x}$ for
each $x,$ where $B_{x}^{\prime }\in \mathcal{O}_{G}(0)$ and $B_{x}^{\prime
}+B_{x}^{\prime }\subseteq B_{x}$. Since $\mathcal{A}$ is compact and $%
A=\bigcup\limits_{x\in A}V_{x}$ there is a finite subset $F$ of $A$ such
that $A_{1}=\bigcup\limits_{x\in F}V_{x}\in \mathcal{A}.$ Let $W\in \mathcal{%
O}_{G}(0)$ be such that $W=-W$ and $W\subseteq \bigcap\limits_{x\in
F}B_{x}^{\prime }\in \mathcal{O}_{G}(0)$. Then $f+\left[ \mathcal{A}%
\downarrow A_{1},W\right] \subseteq \left\langle \mathcal{A},M\right\rangle $%
. Indeed, if $h\in \left[ \mathcal{A}\downarrow A_{1},W\right] $ then there
is $A_{2}\in \mathcal{A},$ $A_{2}\subseteq A_{1}$ such that $%
h(A_{2})\subseteq W$. For each $x\in A_{2},$ there is $t_{x}\in F$ such that 
$x\in V_{t_{x}}$. Note that $V_{t_{x}}\times W_{t_{x}}\subseteq M$ and that $%
f(x)\in f(V_{t_{x}})$ and 
\begin{equation*}
f(V_{t_{x}})+W\subseteq f(x)+B_{x}^{\prime }+B_{x}^{\prime }\subseteq
W_{t_{x}}
\end{equation*}%
so that $V_{t_{x}}\times \left( f(x)+W\right) \subseteq M$ which completes
the proof because $f(x)+W\in \mathcal{O}\left( \left( f+h\right) (x)\right) $
and $V_{t_{x}}\in \mathcal{O}(x)$.
\end{proof}

\begin{corollary}
\label{cor:invariance} $C_{\kappa }(X,G)$ is invariant by translation if and
only if $C_{\kappa }(X,G)=C_{\overline{\kappa }}(X,G).$
\end{corollary}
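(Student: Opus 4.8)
The plan is to derive Corollary~\ref{cor:invariance} directly from Theorem~\ref{th:coincideat0}, treating the equivalence as an essentially formal consequence of the three displayed inclusions already established there. The corollary asserts that $C_{\kappa}(X,G)$ is translation-invariant precisely when the Isbell and fine Isbell topologies coincide, so I would prove the two implications separately, the backward direction being immediate and the forward direction carrying the real content.

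First I would handle the easy implication. Suppose $C_{\kappa}(X,G)=C_{\overline{\kappa}}(X,G)$. By part~(1) of Theorem~\ref{th:coincideat0}, the fine Isbell topological space $C_{\overline{\kappa}}(X,G)$ is invariant by translations; since the two topologies are literally equal, $C_{\kappa}(X,G)$ inherits this invariance verbatim. No further argument is needed here.

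For the forward direction, assume $C_{\kappa}(X,G)$ is translation-invariant, and aim to show the two neighborhood systems agree at every function $f$. The key observation is that parts~(2) and~(3) of Theorem~\ref{th:coincideat0} together give $\mathcal{N}_{\overline{\kappa}}(f)=f+\mathcal{N}_{\kappa}(\overline{0})$ for every $f\in C(X,G)$, since~(2) yields $\mathcal{N}_{\overline{\kappa}}(f)\geq f+\mathcal{N}_{\kappa}(\overline{0})$ and~(3) the reverse inequality. On the other hand, translation-invariance of the Isbell topology means exactly that $\mathcal{N}_{\kappa}(f)=f+\mathcal{N}_{\kappa}(\overline{0})$ for each $f$ (the map $g\mapsto g+f$ is a homeomorphism of $C_{\kappa}(X,G)$, carrying the neighborhood filter at $\overline{0}$ onto that at $f$). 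Combining these two identities gives $\mathcal{N}_{\kappa}(f)=\mathcal{N}_{\overline{\kappa}}(f)$ at every point, whence the two topologies coincide.

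I do not expect a serious obstacle, since all the substantive work resides in Theorem~\ref{th:coincideat0}; the corollary merely repackages its three parts. The one point deserving care is the precise meaning of ``invariant by translations'' in the hypothesis: I would make explicit that it means each translation $g\mapsto g+f$ is a homeomorphism (equivalently, that translations are continuous, the inverse translation $g\mapsto g-f$ supplying continuity of the inverse map), so that the neighborhood filter at $f$ is genuinely the translate $f+\mathcal{N}_{\kappa}(\overline{0})$ of the filter at $\overline{0}$. Once this identification is in place, the chain of equalities closes immediately.
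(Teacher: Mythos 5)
Your proof is correct and is exactly the argument the paper intends (the corollary is stated without an explicit proof, as an immediate consequence of Theorem~\ref{th:coincideat0}): the backward direction follows from translation-invariance of the fine Isbell topology, and the forward direction from combining $\mathcal{N}_{\overline{\kappa}}(f)=f+\mathcal{N}_{\kappa}(\overline{0})$ with $\mathcal{N}_{\kappa}(f)=f+\mathcal{N}_{\kappa}(\overline{0})$. Your closing remark on pinning down the meaning of ``invariant by translations'' is the right point to be careful about and matches the paper's usage.
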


The result above also provides a more handy description of the fine Isbell
topology on $C(X,G)$ when $G$ is a topological group (for instance for $C_{%
\overline{\kappa }}(X,\mathbb{R})$) :

\begin{equation*}
\mathcal{N}_{\overline{\kappa }}(f)=f+\left\{ \left[ \mathcal{A},B\right] :%
\mathcal{A}\in \kappa (X),B\in \mathcal{O}_{G}(0)\right\} .
\end{equation*}

\begin{theorem}
\label{th:finegroup}The following are equivalent:

\begin{enumerate}
\item $C_{\overline{\kappa }}(X,\mathbb{R})$ is a topological vector space;

\item $C_{\overline{\kappa }}(X,\mathbb{R})$ is a topological group;

\item $X$ is infraconsonant.
\end{enumerate}
\end{theorem}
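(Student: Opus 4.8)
The plan is to prove $(3)\Rightarrow(1)\Rightarrow(2)\Rightarrow(3)$, using as the two principal engines Theorem~\ref{th:coincideat0} (which tells us that $C_{\overline{\kappa}}(X,\mathbb{R})$ is translation invariant and that $\mathcal{N}_{\overline{\kappa}}(\overline{0})=\mathcal{N}_{\kappa}(\overline{0})$) together with Theorem~\ref{th:at0} (which identifies infraconsonance with the continuity of addition at $\overline{0}$), and Proposition~\ref{prop:scalarmultIsbell} for the scalar multiplication. The implication $(1)\Rightarrow(2)$ is trivial, since every topological vector space is in particular a topological group.

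For $(3)\Rightarrow(1)$ I would first show that $C_{\overline{\kappa}}(X,\mathbb{R})$ is a topological group by verifying the criterion recalled in the introduction: a topology on an abelian group is a group topology as soon as translations and inversion are continuous and the operation is jointly continuous at the neutral element. Translations are continuous by Theorem~\ref{th:coincideat0}. Inversion is continuous because it is a homeomorphism for $C_{\kappa}(X,\mathbb{R})$ (since $[\mathcal{A},-U]=-[\mathcal{A},U]$, it permutes the filter $\mathcal{N}_{\kappa}(\overline{0})=\mathcal{N}_{\overline{\kappa}}(\overline{0})$), and the topology of $C_{\overline{\kappa}}(X,\mathbb{R})$ is the translation-invariant topology determined by this neighborhood filter at $\overline{0}$, as recorded in the description $\mathcal{N}_{\overline{\kappa}}(f)=f+\{[\mathcal{A},B]\}$ following Corollary~\ref{cor:invariance}. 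Finally, since $X$ is infraconsonant, Theorem~\ref{th:at0} gives continuity of addition at $\overline{0}$ in $C_{\kappa}(X,\mathbb{R})$, that is (\ref{eq:at0}); as $\mathcal{N}_{\overline{\kappa}}(\overline{0})=\mathcal{N}_{\kappa}(\overline{0})$, the very same inequality expresses continuity of addition at $\overline{0}$ in $C_{\overline{\kappa}}(X,\mathbb{R})$.

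It then remains to upgrade this group to a vector space, i.e. to prove joint continuity of scalar multiplication; I expect this to be the main technical step. Exploiting translation invariance, continuity at an arbitrary pair $(\lambda_{0},f_{0})$ reduces, via the identity $\lambda g-\lambda_{0}f_{0}=\lambda(g-f_{0})+(\lambda-\lambda_{0})f_{0}$, to controlling each summand in $\mathcal{N}_{\kappa}(\overline{0})$ and then adding. Proposition~\ref{prop:scalarmultIsbell} (continuity of scalar multiplication for $C_{\kappa}$) handles the first summand through continuity at $(\lambda_{0},\overline{0})$ and the second through continuity of $\mu\mapsto\mu f_{0}$ at $\mu=0$; the two resulting Isbell neighborhoods $N',N''$ of $\overline{0}$ are then recombined inside a prescribed $[\mathcal{A},B]$ by choosing them so that $N'+N''\subseteq[\mathcal{A},B]$, which is possible precisely because (\ref{eq:at0}) holds, i.e. because $X$ is infraconsonant. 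Since all neighborhoods of $\overline{0}$ involved are common to $C_{\kappa}$ and $C_{\overline{\kappa}}$, and $f_{0}+N_{1}$ is a $C_{\overline{\kappa}}$-neighborhood of $f_{0}$, this yields continuity of scalar multiplication in $C_{\overline{\kappa}}(X,\mathbb{R})$, completing $(3)\Rightarrow(1)$; this is essentially the argument already used for the corollary following Proposition~\ref{prop:scalarmultIsbell}.

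For $(2)\Rightarrow(3)$, if $C_{\overline{\kappa}}(X,\mathbb{R})$ is a topological group then addition is in particular jointly continuous at $(\overline{0},\overline{0})$, i.e. $\mathcal{N}_{\overline{\kappa}}(\overline{0})+\mathcal{N}_{\overline{\kappa}}(\overline{0})\geq\mathcal{N}_{\overline{\kappa}}(\overline{0})$; using once more $\mathcal{N}_{\overline{\kappa}}(\overline{0})=\mathcal{N}_{\kappa}(\overline{0})$ this is exactly (\ref{eq:at0}) for $C_{\kappa}(X,\mathbb{R})$, whence $X$ is infraconsonant by the completely regular half of Theorem~\ref{th:at0}. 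This last appeal is where complete regularity must be in force, and it cannot be dropped: the Herrlich-type example following Theorem~\ref{th:at0} is a regular, non-infraconsonant space on which every continuous function is constant, so that $C_{\overline{\kappa}}(X,\mathbb{R})$ is a copy of $\mathbb{R}$ and hence trivially a topological vector space. Thus, beyond routine bookkeeping, the genuine work is the scalar-multiplication step of $(3)\Rightarrow(1)$, while the only delicate hypothesis is the standing complete regularity needed for $(2)\Rightarrow(3)$.
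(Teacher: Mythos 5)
Your proposal is correct and follows essentially the same route as the paper: the group part rests, exactly as in the paper, on Theorem~\ref{th:coincideat0} (translation invariance and $\mathcal{N}_{\overline{\kappa}}(\overline{0})=\mathcal{N}_{\kappa}(\overline{0})$) combined with Theorem~\ref{th:at0}, and your scalar-multiplication step via $\lambda g-\lambda_{0}f_{0}=\lambda(g-f_{0})+(\lambda-\lambda_{0})f_{0}$ is the same decomposition the paper uses (continuity of $S$ at $(r,\overline{0})$ from Proposition~\ref{prop:scalarmultIsbell}, continuity of $r\mapsto rf$ from Proposition~\ref{prop:bounded}, recombined through the group structure at $\overline{0}$). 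Your side remark that complete regularity is genuinely needed for $(2)\Rightarrow(3)$ is also accurate and consistent with the paper's standing framework.
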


\begin{proof}
If $C_{\overline{\kappa }}(X,\mathbb{R})$ is a topological group then $%
\mathcal{N}_{\overline{\kappa }}(\overline{0})+\mathcal{N}_{\overline{\kappa 
}}(\overline{0})=\mathcal{N}_{\overline{\kappa }}(\overline{0}).$ But $%
\mathcal{N}_{\overline{\kappa }}(\overline{0})=\mathcal{N}_{\kappa }(%
\overline{0})$ so that by Theorem \ref{th:at0}, $X$ is infraconsonant.
Conversely, if $X$ is infraconsonant, then $\mathcal{N}_{\overline{\kappa }}(%
\overline{0})+\mathcal{N}_{\overline{\kappa }}(\overline{0})=\mathcal{N}_{%
\overline{\kappa }}(\overline{0})$ and translations are continuous in $C_{%
\overline{\kappa }}(X,\mathbb{R})$ so that $C_{\overline{\kappa }}(X,\mathbb{%
R})$ is a topological group. Remains to see that if $C_{\overline{\kappa }%
}(X,\mathbb{R})$ is a topological group, it is also a topological vector
space.

First, note that for each fixed $f\in C(X,\mathbb{R})$ the map $S_{f}:%
\mathbb{R}\rightarrow C_{\overline{\kappa }}(X,\mathbb{R})$ defined by $%
S_{f}(r)=rf$ is continuous. Indeed, for each $\mathcal{A}\in \kappa (X),$
there is $A\in \mathcal{A}$ and $R\in \mathbb{R}$ such that $f(A)\subseteq
B(0,R)$ by Proposition \ref{prop:bounded}. Therefore for each $O\in \mathcal{%
N}_{\mathbb{R}}(0)$ there is $\delta >0$ such that $B(0,\delta )\cdot
f(A)\subseteq O.$ Thus, if $rf+[\mathcal{A},O]\in \mathcal{N}_{\overline{%
\kappa }}(f)$ then $B(r,\delta )\cdot f=rf+B(0,\delta )\cdot f\subseteq rf+[%
\mathcal{A},O].$

Note also that $S:\mathbb{R\times }C_{\overline{\kappa }}(X,\mathbb{R}%
)\rightarrow C_{\overline{\kappa }}(X,\mathbb{R})$ defined $S(r,f)=rf$ is
continuous at $(r,\overline{0})$ for each $r$ by Proposition \ref%
{prop:scalarmultIsbell}. Let now $r\in \mathbb{R}$ and $f\in C(X,\mathbb{R})$
and consider $rf+W\in $ $\mathcal{N}_{\overline{\kappa }}(f),$ where $W\in 
\mathcal{N}_{\overline{\kappa }}(\overline{0}).$ Since $C_{\overline{\kappa }%
}(X,\mathbb{R})$ is a topological group, there is $V\in $ $\mathcal{N}_{%
\overline{\kappa }}(\overline{0})$ such that $V+V\subseteq W.$ By continuity
of $S_{f},$ there is $T\in \mathcal{N}_{\mathbb{R}}(r)$ such that $T\cdot
f\subseteq rf+V$. Moreover, by continuity of $S$ at $(r,\overline{0})$ there
is $T^{\prime }\in \mathcal{N}_{\mathbb{R}}(r),$ $T^{\prime }\subseteq T$
and $U\in \mathcal{N}_{\overline{\kappa }}(\overline{0})$ such that $%
T^{\prime }\cdot U\subseteq V$. Then 
\begin{equation*}
T^{\prime }\cdot \left( f+U\right) =T^{\prime }\cdot f+T^{\prime }\cdot
U\subseteq rf+V+V\subseteq rf+W,
\end{equation*}%
which proves continuity of $S$ at $(r,f)$ because $f+U\in \mathcal{N}_{%
\overline{\kappa }}(f).$
\end{proof}

In particular, in view of Example \ref{ex:Arens}, the fine Isbell topology
does not need to be a group topology. In \cite[Example 1]{francis.coincide}
Jordan shows that if $X$ and $Y$ are two zero-dimensional consonant spaces
such that the topological sum $Z:=X\oplus Y$ is not consonant then $%
C_{\kappa }(Z,\mathbb{R})<C_{\overline{\kappa }}(Z,\mathbb{R})$. In view of
Corollary \ref{cor:invariance} we obtain:

\begin{example}
\label{ex:nonsplittable} There exists a space $Z$ such that translations of $%
C_{\kappa}(Z,\mathbb{R})$ fail to be continuous.
\end{example}

Following \cite{francis.coincide}, we say that a space $X$ is \emph{%
sequentially inaccessible }provided that for any sequence $\left( \mathcal{F}%
_{n}\right) _{n\in \omega }$ of countably based $z$-filters (filters based
in zero sets) on $X$%
\begin{equation*}
\left( \forall _{n}\;\adh\mathcal{F}_{n}=\varnothing \right)
\Longrightarrow \adh(\bigcap_{n\in \omega }\mathcal{F}%
_{n})=\varnothing .
\end{equation*}%
Jordan proved \cite{francis.coincide} that if $X$ is completely regular,
Lindel\"{o}f and sequentially inaccessible, then the fine Isbell topology
and the natural topology coincide on $C(X,\mathbb{R}).$ This result can be
combined with Theorem \ref{th:finegroup} to obtain the following
Banach-Dieudonn\'{e} like result \footnote{%
The classical theorem of Banach-Dieudonn\'{e} (and its many variants) can be
seen as providing sufficient conditions on a topological vector space for
the natural topology on its dual space to be a group topology. See \cite%
{jarchowbook}, \cite{BDrevisited} for details.}.

\begin{theorem}
If $X$ is a completely regular, Lindel\"{o}f and sequentially inaccessible
space, then the natural topology on $C(X,\mathbb{R})$ is a group (and
vector) topology if and only if $X$ is infraconsonant.
\end{theorem}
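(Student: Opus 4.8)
The plan is to combine two ingredients stated earlier in the excerpt and connect them through the fine Isbell topology. The key observation is that Jordan's coincidence result (quoted just before the statement) tells us that under the hypotheses---$X$ completely regular, Lindel\"{o}f, and sequentially inaccessible---the fine Isbell topology and the natural topology coincide on $C(X,\mathbb{R})$. That is, $C_{\overline{\kappa}}(X,\mathbb{R})$ and the natural topological space on $C(X,\mathbb{R})$ are literally the same topological space under these assumptions. So the question ``is the natural topology a group (vector) topology?'' is, under these hypotheses, exactly the question ``is $C_{\overline{\kappa}}(X,\mathbb{R})$ a group (vector) topology?''

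First I would record this reduction explicitly: by Jordan's theorem, the hypotheses on $X$ force the natural topology on $C(X,\mathbb{R})$ to equal the fine Isbell topology $C_{\overline{\kappa}}(X,\mathbb{R})$. Then I would invoke \thmref{th:finegroup}, which characterizes exactly when $C_{\overline{\kappa}}(X,\mathbb{R})$ is a topological group (equivalently a topological vector space): namely, if and only if $X$ is infraconsonant. Chaining these two equivalences gives the result directly: the natural topology is a group topology iff $C_{\overline{\kappa}}(X,\mathbb{R})$ is a group topology (by the coincidence) iff $X$ is infraconsonant (by \thmref{th:finegroup}).

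The proof is therefore essentially a two-line citation argument, and I would write it as such:
\begin{proof}
Under the stated hypotheses on $X$, the fine Isbell topology and the natural topology coincide on $C(X,\mathbb{R})$ by Jordan's result \cite{francis.coincide}. Hence the natural topology is a group (respectively vector) topology if and only if $C_{\overline{\kappa}}(X,\mathbb{R})$ is, which by \thmref{th:finegroup} holds if and only if $X$ is infraconsonant.
\end{proof}

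I do not anticipate a genuine obstacle, since all the heavy lifting has already been done: \thmref{th:finegroup} packages the group-topology criterion for the fine Isbell topology, and Jordan's coincidence theorem supplies the identification with the natural topology. The only point requiring care is making sure the hypotheses in the statement (completely regular, Lindel\"{o}f, sequentially inaccessible) match exactly those of Jordan's coincidence theorem as quoted, so that the substitution of $C_{\overline{\kappa}}(X,\mathbb{R})$ for the natural topological space is legitimate; once that is confirmed, the equivalence is immediate. The analogy with the Banach--Dieudonn\'{e} theorem, noted in the footnote, is a matter of interpretation rather than something that needs proof.
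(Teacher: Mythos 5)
Your argument is exactly the one the paper intends: it presents the theorem as an immediate combination of Jordan's coincidence result (natural topology $=$ fine Isbell topology under these hypotheses) with Theorem~\ref{th:finegroup}, and gives no further proof. Your two-line citation argument is correct and matches the paper's approach.
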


As we have seen, translations in $C(X,\mathbb{R})$ are, in general, not
continuous for the Isbell topology. They are however continuous if $X$ is
prime (that is, has at most one non-isolated point). More generally,

\begin{proposition}
\label{prop:prime+trans} If $X$ is prime and $G$ is an abelian consonant
topological group, then the Isbell topology on $C(X,G)$ is translation
invariant.
\end{proposition}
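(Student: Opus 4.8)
The plan is to use Corollary~\ref{cor:invariance}, which reduces translation-invariance of $C_{\kappa}(X,G)$ to the coincidence $C_{\kappa}(X,G)=C_{\overline{\kappa}}(X,G)$. Since the inclusion $\mathcal{N}_{\kappa}(f)\leq\mathcal{N}_{\overline{\kappa}}(f)$ always holds (the Isbell topology is coarser than the fine Isbell topology), it suffices to show that every fine Isbell subbasic neighborhood $\langle\mathcal{A},M\rangle$ of an arbitrary $f\in C(X,G)$ contains an Isbell neighborhood of $f$. I would argue that it is enough to treat the case where the unique non-isolated point $\infty$ lies in the relevant witnessing set, since away from $\infty$ every point is isolated and the ``buried in'' condition $f_{|A}\ll M$ becomes a condition that is automatically realized by small basic open sets.

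First I would fix $f\in\langle\mathcal{A},M\rangle$, so there is $A\in\mathcal{A}$ with $f_{|A}\ll M$; for each $x\in A$ choose $V_x\in\mathcal{O}_X(x)$ and $W_x\in\mathcal{O}_G(f(x))$ with $V_x\times W_x\subseteq M$. The point of primeness is that if $\infty\notin A$, then $A$ consists of isolated points and the analysis is trivial (one can shrink each $V_x$ to $\{x\}$); the genuine case is $\infty\in A$. Here I would split the behavior near $\infty$ from the behavior at isolated points: near $\infty$, continuity of $f$ forces $f(x)$ to be close to $f(\infty)$, and I can select a single neighborhood $W\in\mathcal{O}_G(0)$ together with a basic neighborhood $V$ of $\infty$ so that $f(V)\subseteq f(\infty)+W'$ with $W'+W'\subseteq W$, mimicking the computation in part~(3) of the proof of Theorem~\ref{th:coincideat0}.

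The consonance of $G$ enters to convert the open set $M$, which need not be a product near $\infty$, into a compact-family description that can be pulled back through $f$. Concretely, the slice $M(\infty)=\{g\in G:(\infty,g)\in M\}$ is an open set containing $f(\infty)$, and since $G$ is a consonant topological group I expect to be able to replace the ``buried'' condition by the requirement that $f(\mathcal{A})$ meet a compactly generated neighborhood, so that $\langle\mathcal{A},M\rangle$ absorbs a set of the form $f+[\mathcal{A}\downarrow A_0,W]$ for suitable $A_0\in\mathcal{A}$ and $W\in\mathcal{O}_G(0)$. Combining the isolated-point contribution with the neighborhood-of-$\infty$ contribution and intersecting finitely many such $W$'s (using compactness of $\mathcal{A}$ to extract a finite subcover of $A$ by the $V_x$, exactly as in Theorem~\ref{th:coincideat0}(1)) should yield an Isbell neighborhood of $f$ inside $\langle\mathcal{A},M\rangle$.

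The main obstacle I anticipate is handling the interaction at $\infty$ between the non-product shape of $M$ and the fact that the fine Isbell ``buried in'' relation allows the neighborhood $W_x$ of $f(x)$ to vary with $x$ as $x\to\infty$. For isolated points this variation is harmless, but for a sequence approaching $\infty$ one must produce a \emph{single} group-neighborhood $W$ that works simultaneously; this is where both the continuity of $f$ at $\infty$ and the consonance of $G$ must be used together, and it is the step most likely to require care rather than the routine finite-intersection bookkeeping. I would therefore organize the proof so that the delicate estimate is isolated in a single paragraph dealing with the trace of $\mathcal{A}$ on a neighborhood of $\infty$, after which the remaining verification is parallel to the argument already given for $C_{\overline{\kappa}}$.
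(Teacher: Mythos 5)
Your opening reduction is sound but buys nothing: by Theorem~\ref{th:coincideat0}, $\mathcal{N}_{\overline{\kappa}}(f)=f+\mathcal{N}_{\kappa}(\overline{0})$, so proving $C_{\kappa}(X,G)=C_{\overline{\kappa}}(X,G)$ is \emph{literally} the statement that every set $f+[\mathcal{A},B]$ is an Isbell neighborhood of $f$ --- i.e.\ the continuity of translations itself. (In particular you need not analyze the sets $\langle\mathcal{A},M\rangle$ at all; the whole content is to squeeze a finite intersection of Isbell subbasic neighborhoods of $f$ inside $f+[\mathcal{A},B]$.) The paper proves exactly this, directly. So the question is whether your sketch of that step works, and here there are two genuine gaps. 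First, the isolated points are not ``trivial.'' After using primeness and compactness of $\mathcal{A}$ to replace $A$ by a set $A_{1}=V_{0}\cup F\in\mathcal{A}$ with $V_{0}$ a small neighborhood of $\infty$ and $F$ finite and isolated, you must force $h(x)$ close to $f(x)$ for the finitely many $x\in F$, whose values $f(x)$ are scattered in $G$; but a single Isbell subbasic set $[\mathcal{B},U]$ constrains $h$ only into one open set $U$ on a whole member of $\mathcal{B}$. The paper has to partition $F$ into pieces $F_{1},\dots,F_{n}$ on which $f$ (and $g$) oscillate by less than $W$ and then form the meshed compact families $\mathcal{A}_{1}\vee F_{k}$ via Lemma~\ref{lem:meshclosed}, yielding the finite intersection $\bigcap_{k}[\mathcal{D}_{k},f_{0}(x_{k})+2W]$. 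Shrinking $V_{x}$ to $\{x\}$ produces none of this, and without it you cannot exhibit an Isbell neighborhood at all.

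Second, the use of consonance of $G$ is asserted (``I expect to be able to\dots'') rather than performed, and your proposed use of the slice $M(\infty)$ points in the wrong direction: the uniformity you need is not at the single point $\infty$ but over an entire member of $\mathcal{A}$. The paper's mechanism is to observe that $\mathcal{O}_{G}\bigl((f_{0}+g)(\mathcal{A})\bigr)$ is a compact family on $G$, so consonance of $G$ produces a compact $K\subseteq U$ with $\mathcal{O}_{G}(K)\subseteq\mathcal{O}_{G}\bigl((f_{0}+g)(\mathcal{A})\bigr)$, hence a single symmetric closed $W$ with $K+3W\subseteq U$ and an $A_{0}\in\mathcal{A}$ with $(f_{0}+g)(A_{0})\subseteq K+W$; this single $W$ is then what makes the finite bookkeeping above close up. Continuity of $f$ at $\infty$ only handles the trace on $V_{0}$ and cannot substitute for this step. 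In short, your outline shares the paper's skeleton (split $V_{0}$ from a finite isolated part, use consonance of $G$, finish by compactness of $\mathcal{A}$), but the two steps that carry the actual mathematical weight are the ones you either skip or label trivial, so as written the proposal does not constitute a proof.
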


\begin{proof}
If a family $\mathcal{A}$ is compact on $X$, then for each $x\in X$ the
family $\mathcal{A}_{x}:=\mathcal{O}_{X}(x)\cap \mathcal{A}$ is compact
included in $\mathcal{A}$. Therefore it is enough to consider basic
neighborhoods for the Isbell topology of the form $[\mathcal{A}_{x},U]$
where $U$ is an open subset of $G$.

\qquad Consider $f\mapsto g+f$ for $g\in C(X,G)$. Let $x\in X$ and $\mathcal{%
A}$ be a compact family included in $\mathcal{O}_{X}(x)$ and $U$ is an open
subset of $G$. Let $f_{0}+g\in \lbrack \mathcal{A},U]$.

\qquad The family $\mathcal{D}:=\mathcal{O}_{G}(\left( f_{0}+g\right) (%
\mathcal{A}))$ is compact in a consonant space $G$, hence there is a compact
set $K\subset U$ such that $\mathcal{O}_{G}(K)\subset \mathcal{D}$. Let $%
W=-W $ be a closed neighborhood of $0$ in $G$ such that $K+3W\subset U$.
Then there is $A\in \mathcal{A}$ such that $\left( f_{0}+g\right) (A)\subset
K+W$. Furthermore there exists $A_{0}\in \mathcal{A}$ such that $%
A_{0}\subset A$ and $f_{0}(A_{0})$ is bounded and $\left( f_{0}+g\right)
(A_{0})\subset K+W$.

\qquad Let $V_{0}$ be an element of $\mathcal{O}_{X}(x)$ included in $A_{0}$
such that $f_{0}(V_{0})\subset f_{0}(x_{\infty })+W$ and $g(V_{0})\subset
g(x_{\infty })+W$.

\qquad Then there is a finite subset $F$ of $A_{0}$ (disjoint from $V_{0}$)
such that $A_{1}:=V_{0}\cup F\in \mathcal{A}$. Then let $\mathcal{A}_{1}:=%
\mathcal{A}\downarrow A_{1}$. Of course, $f_{0}+g\in \lbrack \mathcal{A}%
_{1},K+W]$ and $\mathcal{A}_{1}\in \kappa (X)$.

\qquad Then there is $n<\omega $ and finite sets $F_{1},\ldots ,F_{n}$ such
that $f_{0}(F_{k})-$ $f_{0}(F_{k})\subset W$ and $g(F_{k})-g(F_{k})\subset W$
for each $1\leq k\leq n$, and moreover $F_{1}\cup \ldots \cup F_{n}=F$.
Finally, let $\mathcal{D}_{0}:=\mathcal{A}_{1}\vee V_{0}$ and $\mathcal{D}%
_{k}:=\mathcal{A}_{1}\vee F_{k}$ for $1\leq k\leq n$. Then $\mathcal{A}%
_{1}=\bigcap\limits_{k=0}^{n}\mathcal{D}_{k}$. On the other hand, there
exist $x_{k}\in F_{k}$ for $1\leq k\leq n$, such that%
\begin{equation*}
f_{0}\in \bigcap\limits_{k=0}^{n}[\mathcal{D}_{k},f_{0}(x_{k})+W],
\end{equation*}%
where $x_{0}:=x_{\infty }$. If now $f\in \bigcap_{k=0}^{n}[\mathcal{D}%
_{k},f_{0}(x_{k})+2W]$ then%
\begin{equation*}
f+g\in \bigcap_{k=0}^{n}[\mathcal{D}_{k},f_{0}(x_{k})+g(x_{k})+3W]\subset
\lbrack \mathcal{A}_{1},U]\subset \lbrack \mathcal{A},U].
\end{equation*}
\end{proof}

\begin{corollary}
\label{thm:first} If $X$ is a prime topological space, then $C_{\kappa }(X,%
\mathbb{R})$ is translation invariant.
\end{corollary}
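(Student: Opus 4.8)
The plan is to obtain this as the immediate specialization of Proposition \ref{prop:prime+trans} to the group $G=\mathbb{R}$. All the substantive work---the explicit manipulation of the basic Isbell neighborhoods $[\mathcal{A}_{x},U]$, the splitting of a finite set $F$ into blocks $F_{1},\ldots,F_{n}$ on which $f_{0}$ and $g$ oscillate by less than $W$, and the passage from a neighborhood of $f_{0}+g$ to one of $f+g$ via the $K+3W\subset U$ estimate---has already been carried out at the level of an arbitrary abelian consonant topological group. Thus for the corollary I would only need to check that $\mathbb{R}$ satisfies the two hypotheses of that proposition, namely that it is an abelian topological group and that it is consonant.

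The first point is clear, since $(\mathbb{R},+)$ is an abelian topological group. The one fact genuinely worth recording is that $\mathbb{R}$ is consonant: this follows from its local compactness, as every locally compact (regular) space is consonant. Moreover, the consonance of $\mathbb{R}$ is already invoked elsewhere in the paper, for instance in the proof of Proposition \ref{prop:scalarmultIsbell}, where $\mathcal{O}\left((rf)(\mathcal{A})\right)$ is treated as a compact family of the consonant space $\mathbb{R}$. With both hypotheses verified, Proposition \ref{prop:prime+trans} applies with $G=\mathbb{R}$ and yields directly that the Isbell topology on $C(X,\mathbb{R})$, that is $C_{\kappa}(X,\mathbb{R})$, is translation invariant.

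Since the corollary is a pure instantiation of the preceding proposition, I do not anticipate any real obstacle: there is nothing to establish beyond confirming that $\mathbb{R}$ is an abelian consonant topological group and invoking the general result. If one wished to avoid citing the proposition, the only alternative would be to rerun its chain of neighborhood inclusions in the special case $G=\mathbb{R}$, replacing the abstract symmetric zero-neighborhoods $W$ by intervals $(-\varepsilon,\varepsilon)$; but this would merely reproduce the existing argument verbatim and offers no conceptual or technical simplification, so the clean path is to specialize.
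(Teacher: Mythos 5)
Your proposal is correct and matches the paper's intent exactly: the corollary is stated without proof precisely because it is the instantiation of Proposition \ref{prop:prime+trans} with $G=\mathbb{R}$, and your verification that $\mathbb{R}$ is an abelian consonant topological group (via local compactness) is the only content needed.
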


Since, by Corollary \ref{cor:invariance}, $C_{\kappa }(X,\mathbb{R})$ is
translation invariant if and only if it coincides with $C_{\overline{\kappa }%
}(X,\mathbb{R})$, Corollary \ref{thm:first} implies a result \cite[Theorem 18%
]{francis.coincide} of Jordan that the Isbell and the fine Isbell topologies
coincide provided that the underlying topology is prime.

Corollary \ref{thm:first} combined with Theorem \ref{th:at0} and Corollary %
\ref{cor:invariance} leads to the following results.

\begin{theorem}
\label{cor} If $X$ is prime, then $C_{\kappa }(X,\mathbb{R})$ is a
topological group (or topological vector space) if and only if $X$ is
infraconsonant.
\end{theorem}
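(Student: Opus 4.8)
The plan is to assemble Theorem~\ref{cor} directly from the three ingredients already established, so that essentially no new argument is needed. The statement asserts, for prime $X$, the equivalence: $C_{\kappa}(X,\mathbb{R})$ is a topological group (equivalently, a topological vector space) if and only if $X$ is infraconsonant. First I would recall the standard criterion for a topology on an abelian group: it is a group topology precisely when the translations are continuous, the inversion is continuous, and the addition is jointly continuous at the neutral element $\overline{0}$ (condition~(\ref{eq:at0})). For $C_{\kappa}(X,\mathbb{R})$ the inversion is always a homeomorphism, as noted right after the proof of Theorem~\ref{thm:dual_Tikh}, so that hypothesis is automatic and can be dispensed with in one sentence.

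Next I would handle the two remaining hypotheses separately. For translation invariance, the key observation is that $X$ is prime by assumption, so Corollary~\ref{thm:first} applies and gives that $C_{\kappa}(X,\mathbb{R})$ is automatically translation invariant, \emph{regardless} of infraconsonance. This means translation continuity is never an obstruction in the prime case, and the whole equivalence collapses onto the single condition~(\ref{eq:at0}). For the joint continuity of addition at $\overline{0}$, I would invoke Theorem~\ref{th:at0}: since a prime space is completely regular, the theorem gives that (\ref{eq:at0}) holds \emph{if and only if} $X$ is infraconsonant. Combining these, $C_{\kappa}(X,\mathbb{R})$ is a group topology iff (\ref{eq:at0}) holds (translations and inversion being free) iff $X$ is infraconsonant.

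The final step is to upgrade "topological group" to "topological vector space," which the statement lists as equivalent. Here I would appeal to the Corollary following Proposition~\ref{prop:scalarmultIsbell}, which states that if $C_{\kappa}(X,\mathbb{R})$ is a topological group then it is a topological vector space; this in turn rests on Proposition~\ref{prop:scalarmultIsbell}, the joint continuity of scalar multiplication. Thus the parenthetical "or topological vector space" adds nothing new and follows immediately.

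I do not expect a genuine obstacle here, since the theorem is a synthesis of earlier results rather than a fresh argument; the only care required is bookkeeping—making sure that primeness delivers both complete regularity (needed to apply the nontrivial direction of Theorem~\ref{th:at0}) and translation invariance (from Corollary~\ref{thm:first}), and that the two directions of the equivalence are threaded correctly through (\ref{eq:at0}). The one place to be slightly watchful is the converse direction: I must confirm that infraconsonance, via Theorem~\ref{th:at0}, yields continuity of addition at $\overline{0}$, which together with the automatic translation invariance and inversion continuity assembles into the full group topology—so the proof reads as a short citation chain, \emph{Corollary~\ref{thm:first} $+$ Theorem~\ref{th:at0} $+$ Corollary to Proposition~\ref{prop:scalarmultIsbell}}, with the group-topology criterion as the organizing principle.
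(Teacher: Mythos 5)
Your proposal is correct and matches the paper's own argument, which likewise obtains the theorem by combining Corollary~\ref{thm:first} (translation invariance for prime $X$), Theorem~\ref{th:at0} (continuity of addition at $\overline{0}$ if and only if $X$ is infraconsonant), and the joint continuity of scalar multiplication from Proposition~\ref{prop:scalarmultIsbell} to upgrade ``group'' to ``vector space.'' The only cosmetic difference is that the paper threads the translation step through Corollary~\ref{cor:invariance} and Theorem~\ref{th:finegroup} rather than applying the group-topology criterion directly, but the underlying content is identical.
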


Note that if $X$ is as in Example \ref{ex:Arens}, then $C_{\kappa }(X,%
\mathbb{R})$ is invariant by translation but not a topological group. 


\end{document}